\documentclass[11pt, twoside]{article}
\usepackage[a4paper,left=2.5cm,right=2.5cm,top=2.5cm,bottom=2.5cm]{geometry} 
\usepackage{etoolbox}
\usepackage{amsmath,amssymb,amsthm,amsfonts,dsfont,mathrsfs}
\usepackage{physics}
\usepackage{bbm}
\usepackage{graphicx, fancyhdr}
\usepackage{algorithm}
\usepackage[noend]{algpseudocode}
\usepackage{caption}
\usepackage[margin=0.1cm]{subcaption}
\usepackage{footnote}
\usepackage[dvipsnames]{xcolor}
\usepackage{comment}
\usepackage{enumitem}

\usepackage{thmtools}
\usepackage{thm-restate}
\usepackage{lipsum}

\usepackage[utf8]{inputenc}

\usepackage{hyperref} % clickable toc refs
\usepackage[nameinlink]{cleveref}
\let\cref = \Cref

\hypersetup{colorlinks=true,
            citecolor=ForestGreen,
            linkcolor=ForestGreen,    
            urlcolor=Brown}

\usepackage[backend=biber,style=numeric,maxnames=10]{biblatex}
\AtEveryBibitem{% Clean up the bibtex rather than editing it
    \clearlist{address}
    \clearfield{date}
    \clearfield{eprint}
    \clearfield{isbn}
    \clearfield{issn}
    \clearlist{location}
    \clearfield{month}
    \clearfield{series}
    \clearfield{doi}
    \ifentrytype{book}{}{
        % Remove publisher and editor except for books
        \clearlist{publisher}
        \clearname{editor}
    }
}

\setlist[enumerate]{label=\upshape(\roman*), topsep=1ex, itemsep=0.5ex, parsep=0pt, leftmargin=8ex}

% any extra spacing before and after theorems

\newtheoremstyle{mystyle}% <name>
  {1.2em}% <Space above>
  {1.2em}% <Space below>
  {\itshape}% <Body font>
  {}% <Indent amount>
  {\bfseries}% <Theorem head font>
  {.}% <Punctuation after theorem head>
  {.5em}% <Space after theorem head>
  {}% <Theorem head spec (can be left empty, meaning ‘normal’)>

\newtheorem{theorem}{Theorem}[section]
\newtheorem{lemma}[theorem]{Lemma}

\newtheorem{corollary}[theorem]{Corollary}

\newtheorem{example}[theorem]{Example}
\newtheorem{remark}[theorem]{Remark}
\newtheorem{definition}[theorem]{Definition}

%\makeatletter
%\apptocmd{\endproof}{\vspace{0.2em}}{}{}
%\makeatother

\setlength{\parskip}{0ex}
\setlength{\parindent}{4ex}

%% Greek symbols \g(.).

\let\ga=\alpha \let\gb=\beta \let\gc=\gamma \let\gd=\delta 
            
 \let\go=\omega

%% End Greek Symbols

%%%%%%%%%%%%%%%%%%%%%%%%%%%%%%%%%%%%%%%%%%%
%%%%%%%%%%%%%%%%%%%%%%%%%%%%%%%%%%%%%%%%%%%

%% MathCal symbols \mathcal{X} = \cX

\newcommand{\cM}{\mathcal{M}}\newcommand{\cN}{\mathcal{N}}\newcommand{\cO}{\mathcal{O}}

%% End MathCal symbols

%%%%%%%%%%%%%%%%%%%%%%%%%%%%%%%%%%%%%%%%%%%
%%%%%%%%%%%%%%%%%%%%%%%%%%%%%%%%%%%%%%%%%%%

%% Math Bold Symbols commands \boldsymbol{str} = \bsstr

\newcommand{\bs}[1]{\boldsymbol{#1}}

%% Math Bold Symbols commands  

%%%%%%%%%%%%%%%%%%%%%%%%%%%%%%%%%%%%%%%%%%%
%%%%%%%%%%%%%%%%%%%%%%%%%%%%%%%%%%%%%%%%%%%

%% Math Frakur fonts \mathfrac{x, X} = \fx, \fX
%% except for i which is \mfi

%% End Math Frak  

%%%%%%%%%%%%%%%%%%%%%%%%%%%%%%%%%%%%%%%%%%% 
%%%%%%%%%%%%%%%%%%%%%%%%%%%%%%%%%%%%%%%%%%%

% Double capital letters \mathbb{X} = \bX

% End Double capital letters  

%%%%%%%%%%%%%%%%%%%%%%%%%%%%%%%%%%%%%%%%%%%
%%%%%%%%%%%%%%%%%%%%%%%%%%%%%%%%%%%%%%%%%%% 

%% Blackboard bold capitals \mathds{X} = \dX

%% Blackboard bold

%%%%%%%%%%%%%%%%%%%%%%%%%%%%%%%%%%%%%%%%%%%
%%%%%%%%%%%%%%%%%%%%%%%%%%%%%%%%%%%%%%%%%%%

% Roman capital letters \mathrm{X} = \rX

% End Roman capital letters  

%%%%%%%%%%%%%%%%%%%%%%%%%%%%%%%%%%%%%%%%%%%
%%%%%%%%%%%%%%%%%%%%%%%%%%%%%%%%%%%%%%%%%%%

% Script capital letters \mathscr{X} = \sX

% End script capital letters

%%%%%%%%%%%%%%%%%%%%%%%%%%%%%%%%%%%%%%%%%%%
%%%%%%%%%%%%%%%%%%%%%%%%%%%%%%%%%%%%%%%%%%%

%% Local Math Operator 

\DeclareMathOperator{\E}{\mathds{E}}
\DeclareMathOperator{\R}{\mathbb{R}}

\DeclareMathOperator{\Z}{\mathbb{Z}}

\DeclareMathOperator{\pr}{\mathds{P}}

\DeclareMathOperator{\im}{Im}

%% End Local Math Operator

%%%%%%%%%%%%%%%%%%%%%%%%%%%%%%%%%%%%%%%%%%%
%%%%%%%%%%%%%%%%%%%%%%%%%%%%%%%%%%%%%%%%%%%

% vectors

\renewcommand{\v}[1]{\bs{#1}} 

% tensors

\renewcommand{\t}[1]{\bs{\mathcal{#1}}} 

% matrices
\newcommand{\m}[1]{\bs{#1}}

% standard vectors matrices and tensors

\def\a{\v a}

\def\u{\v u}

\def\x{\v x}
\def\y{\v y}

\def\A{\m A}

\def\D{\m D}
\def\G{\m G}
\def\H{\m H}

\def\P{\m P}

\def\S{\m S}

% misc

  % inline definition

% Gaussian

% \tprod{x, d} = \v{x}^{\otimes d}

% ceil and floor

% inner product

% comparison

\renewcommand{\leq}{\leqslant} 
\renewcommand{\geq}{\geqslant} 

% bold, italic, etc
\let\it = \textit
\let\bf = \textbf

% tensor products

% sets

\let\sset = \subseteq

\let\inter = \cap
\let\Union = \bigcup

\newcommand{\set}[1]{\left\{#1\right\}}

% misc

% font shortcuts
\let\eps = \varepsilon

% tilde and hat
\let\wt = \widetilde
\let\wh = \widehat
\let\wb = \overline

% tensor networks
\let\cp\relax
\DeclareMathOperator{\cp}{CP}

\newcommand{\ind}[1]{\mathds{1}_{#1}}
%\newcommand{\norm}[1]{\left\Vert#1\right\Vert}
%\newcommand{\abs}[1]{\left\vert#1\right\vert}

% convergence

\DeclareMathOperator{\erip}{eRIP}
\DeclareMathOperator{\argmed}{argmed}
\DeclareMathOperator{\med}{med}
\DeclareMathOperator{\relu}{ReLU}
\DeclareMathOperator{\poly}{poly}
\def\pio{\pi_\circ}
\newcommand{\zcl}[1]{\overline{#1}^z}

\title{Norming Sets for Tensor and Polynomial Sketching}
\author{Yifan Zhang\thanks{Oden Institute, University of Texas at Austin (\href{mailto:yf.zhang@utexas.edu}{yf.zhang@utexas.edu}).}\and Joe Kileel\thanks{Department of Mathematics and Oden Institute, University of Texas at Austin (\href{mailto:jkileel@math.utexas.edu}{jkileel@math.utexas.edu}).}}
\date{}

\addbibresource{refs.bib}

\begin{document}

\maketitle

\begin{abstract}
    \noindent
    This paper develops the sketching (i.e., randomized dimension reduction) theory for real algebraic varieties and images of polynomial maps, including, e.g., the set of low rank tensors and tensor networks. Through the lens of norming sets, we provide a framework for controlling the sketching dimension for \it{any} sketch operator used to embed said sets, including sub-Gaussian, fast Johnson-Lindenstrauss, and tensor structured sketch operators. Leveraging norming set theory, we propose a new sketching method called the median sketch.  It embeds such a set $V$ using only $\wt\cO(\dim V)$ {tensor structured} or {sparse} linear measurements. 
\end{abstract}

\vspace*{0.5ex}
{\small
\noindent\bf{Key words:} randomized sketching, dimension reduction, Johnson-Lindenstrauss transform, real algebraic variety, polynomial image, tensor, norming set, median of means \hfill\\

\noindent\bf{MSC 2020:} 68W20, 68R12, 14P05, 15A69,  14Q20, 68Q87
}
\hfill\\[1.5em]

\section{Introduction}

Dimension reduction though random maps, also known as \it{sketching}, has received massive attention in the era of big data, as the curse of dimensionality represents a major bottleneck in many applications.
Given a set of data $\set{\x_1,\ldots,\x_p}$ in $\R^N$,
the well known Johnson-Lindenstrauss transform (JLT) \cite{johnson1984extensions} is a randomized linear transformation $\S: \R^N \rightarrow \R^m$ that is an approximate isometry on the dataset with respect to the Euclidean/Frobenius norm $\| \cdot \| = \| \cdot \|_2$:
\begin{equation*}
    \|\S(\x_i - \x_j)\| \approx \|\x_i - \x_j\|~~\text{for all } i, j,
\end{equation*}
while greatly reducing the dimension of the data points from $N$ to $m = \cO(\log p)$.
We call $m$ the \textit{sketching dimension}.
Since the JLT approximately preserves the geometry of the dataset, its compression power can lead to a speedup in downstream tasks such as clustering or classification.
Therefore, substantial research efforts have been invested into improving the design of a JLT matrix $\S$ while maintaining a low sketching dimension.
Examples include the standard sub-Gaussian sketch, fast JL transform (FJLT) \cite{ailon2006approximate} based on FFT-type  transforms, and sparse sketch operators (OSNAP) \cite{nelson2013osnap}, etc.  The latter two sketch operators require less space to store and can be applied to datasets using fewer flops, and are therefore advantageous computationally.

Besides finite datasets, researchers have also considered embedding infinite subsets.
The problem that has received the most attention is sketching (i.e., embedding) a linear subspace in $\R^N$, for linear algebra is the core in many applications.
Specifically, given a linear subspace $U \subseteq \mathbb{R}^N$, the sketch operator $\S$ has to be such that for all $\x \in U$ we have
$\|\S\x\| \approx \|\x\|$. 
Such a sketch operator is called a subspace embedding (SE) of $U$.
An important use case of SEs is to improve the efficiency of solving overdetermined linear least squares problems.
For a review on that matter, readers may consult \cite{woodruff2014sketching,martinsson2020randomized}.

Recent developments in the subject have concerned sketching nonlinear infinite subsets $V \sset \R^N$, so that for every $\x \in V$ it holds $\|\S\x\| \approx \|\x\|$.
We can preserve pairwise distances by solving this problem where $V$ is replaced by $V - V$ (the Minkowski sum $V + (-V)$); then  for all $\x, \y \in V$ we have $\| \S(\x - \y) \| \approx \| \x - \y \|$ (equivalently, $\| \S\x - \S \y \| \approx \| \x - \y \|$ if $\S$ is linear). 
Standard tools to control the sufficient sketching dimension are the covering number and the Gaussian width (see \cite{vershynin2018high}).
The covering number $\cN(V, \eps)$ is the smallest number of balls of radius $\eps$ needed to cover the set $V$.
The Gaussian width can be upper-bounded through the Dudley integral once the covering numbers are bounded.
If $\S$ is a sub-Gaussian sketch or an FJLT sketch, an upper bound on the Gaussian width implies an upper bound on the sketching dimension \cite{vershynin2018high,oymak2018isometric}.
Using this approach, \cite{yap2013stable,iwen2022fast} developed bounds for sketching dimensions when embedding smooth manifolds with or without boundaries.
In \cite{zhang2023covering}, the authors of the present paper proved a covering number bound for a class of sets including (compact subsets of) the image of polynomial maps, the image of rational maps, algebraic varieties (i.e., zero sets of polynomial equations), and semialgebraic sets (i.e., feasible sets of polynomial inequalities).  Thus \cite{zhang2023covering} provides sketching dimension bounds for all such subsets when using sub-Gaussian or FJLT sketch operators.

\subsection{Aim of this paper}\label{subsec:aim}

The goal of this paper is to provide new tools and theory for sketching images of polynomial maps and algebraic varieties using \textit{any} sketch operators.  In particular, we target structured sketch operators with computational advantages, such as tensor structured or sparse sketches.

To illustrate the idea, consider the following model problem. 
Let $\t T\in\R^{n^d}$ be a fixed tensor, with $d$ modes and having length $n$ in each mode. 
Consider the task of evaluating the Frobenius distance between $\t T$ and a low canonical polyadic (CP) rank tensor:
\begin{equation}\label{eq:cp}
    \t M = \sum\nolimits_{i = 1}^r \a_1^{(i)} \otimes \cdots\otimes \a_d^{(i)}.
\end{equation}
For convenience, denote this by $\t M = \cp(\A_1,\ldots,\A_d)$,
where $\A_j = (\a_j^{(1)} | \ldots |\a_j^{(r)}) \in\R^{n\times r}$. 
Evaluating $\|\t M - \t T\|$ directly requires $\cO(rn^d)$ flops, which is often too expensive in practice, especially when such distance is requested repeatedly for many choices of $\t M$, say in an optimization algorithm seeking for the optimal low rank approximation of $\t T$.

To overcome this, one could use a (linear) dimension reduction map $\S: \R^{n^d} \to \R^m$, $m\ll n^d$, so that for all tensors $\t M$ of all rank at most $r$ it holds
\begin{equation} \label{eq:example-model}
    \|\S\t M - \S\t T\| \approx \|\t M - \t T\|.
\end{equation}
This is equivalent to saying $\|\S\x\| \approx \|\x\|$ for all $\x$ in the image of the polynomial map $p: (\mathbb{R}^{n \times r})^d \rightarrow \mathbb{R}^{n^d}, (\A_1,\ldots,\A_d) \mapsto \t T - \cp(\A_1,\ldots,\A_d)$. 
To approximate the distance, we compute $\S\t T \in\R^m$ \textit{only once}.  Then for each input $\t M$, we evaluate $\S\t M$ and compute distances in $\R^m$.

For this model problem, the quality of the randomized dimension reduction depends on: (i) given an error tolerance, how large $m$ has to be so that the difference between $\|\S\x\|$ and $\|\x\|$ is within the tolerance with high probability; 
and (ii) how efficient it is to compute $\S\t M$ for $\t M$ given in a factorized form \eqref{eq:cp}. 
These two properties, corresponding to \textit{compression power} and \textit{efficiency} respectively, are important for other applications of randomized sketching as well.

In general, denote $V \subseteq \mathbb{R}^N$ the set to be sketched.
When $\S$ is a sub-Gaussian sketch or an FJLT sketch, \cite{zhang2023covering} proved a sketching dimension bound for $V$ being a polynomial image or a variety through bounding the covering number.
In particular, if $V$ is the image of $\mathbb{R}^n$  under a polynomial map with coordinate functions of degree at most $d$, 
the sketching dimension for a sub-Gaussian sketch is 
\begin{equation*}
    m = C\eps^{-2}(nd\log n + \log(1/\delta)),
\end{equation*}
where $\eps$ is the relative sketch error ($\|\S\x\|^2 \asymp (1\pm\eps)\|\x\|^2$) and $\delta$ is the failure probability.
This bound is close to optimal, in that for linear subspaces $V$ of dimension $n$, the sketching dimension bound is $C\eps^{-2}(n + \log (1/\delta))$.  However, the application of an unstructured sub-Gaussian matrix or an FJLT matrix is in general not efficient; for example, it is expensive when $V$ is the set of translate low rank CP tensors as in the model problem.  

On the other hand, there is a large volume of research on the theory {and applications} of structured sketches, including \it{tensor structured} or \it{sparse} sketch operators (e.g., \cite{jin2021faster,malik2020guarantees,ahle2020oblivious,haselby2023modewise,ma2022cost,hur2023generative,pham2013fast,bujanovic2024subspace}), such as OSNAP, the Kronecker FJLT (KFJLT), tensor sketch, Khatri-Rao sketch, and more generally tensor network structured sketches.
Many of these well designed sketch operators can be applied 
in near linear time (e.g., the KFJLT is applied in $\wt\cO(n)dr$ time to $\t M$ in the model problem).
Even for embedding general sets $V$ (e.g., where the tensor structure in $\t M$ is absent), tensor structured and sparse sketches always save storage and require fewer of random bits, and they are often faster to apply compared to sketches without a tensor structure.
However, to the best of our knowledge there is no available theory to control the sketching dimension $m$ for embedding the entire set CP tensors of rank $r$ or a general variety or a polynomial image.

To retain the advantages from both worlds, in this paper we provide new tools for deriving sketching dimension bounds for embedding varieties and polynomial images using \it{any sketching operator} $\S$, as long as the concentration behavior for $\S$ applied to any single vector is controlled.
The key is to connect sketching theory with \it{norming sets} (see \cref{def:norming} and, e.g., \cite{bos2018fekete} for more details).  Norming sets are a topic mainly from the field of approximation theory, but were recently linked to algebraic varieties using Hilbert functions of varieties in \cite{altschuler2023kernel}. 
Besides the work \cite{altschuler2023kernel}, the idea of norming set has also appeared implicitly in, e.g., \cite{siegel2022sharp,ergur2019probabilistic}.

\subsection{Our contributions} \label{sec:contrib}
In this paper, we establish that norming set theory is a powerful tool for bounding sketching dimensions, despite the fact that norming sets have not received their due attention.
Specifically, we 
provide a unified framework (\cref{thm:main}) for computing sufficient sketching dimension bounds when embedding a (subset of a) variety or polynomial image using any sketch operator $\S$, provided that the concentration behavior of $\S$ applied to a single vector is controlled.
\Cref{thm:main} extends the results in \cite{zhang2023covering} about sketching such 
sets to a much broader class of sketch operators, including tensor structured and sparse sketch operators. 
The new framework also implies that  (\cref{exp:subG}) if a sub-Gaussian sketch is used to sketch a polynomial image of $n$ variables under polynomials of degree at most $d$, then the sketching dimension is $\cO(n\log(nd))$, an improvement from the result $\cO(nd\log n)$ in \cite{zhang2023covering}.

As another main contribution, we propose a new sketching approach called median sketch, which is inspired by the median of means estimator in robust statistics.
By applying norming set theory, we prove that
on a variety or polynomial image $V\sset \R^N$,
median sketch achieves an accuracy of $\eps$ with failure probability at most $\delta$ using only  
\begin{equation*}  
    C \eps^{-2} \big(\dim(V) \log(N/\eps) + \log(1/\delta)\big)
\end{equation*}
tensor structured or sparse linear measurements (or any other type of sketch), 
provided that $V$ satisfies mild assumptions on its degree, and that on every point $\x\in V$, the sketch successfully embeds $\x$ to an accuracy $\eps$ with probability at least (say) $2/3$ using only $\cO(\eps^{-2})$ measurements. 
More details can be found in \cref{thm:medsketch}.
Our bound on the amount of linear measurements is near optimal, in that using a Gaussian sketch to embed a linear space $V$ already requires $C\eps^{-2}(\dim(V) + \log(1/\delta))$ measurements.  However, it allows for the use of structured linear measurements with desirable computational and/or storage costs.

\subsection{Notation} \label{sec:notation}

We use bold upper case calligraphic letters ($\t T,\t A, \ldots$) for tensors (usually of order at least 3), bold upper case letters ($\m T, \m A, \ldots)$ for matrices, and bold lowercase letters ($\v a, \v u, \ldots$) for vectors. 
The norm $\|\cdot\|$ by default refers to the $\ell_2$ (i.e., Frobenius) norm.
For two real numbers $a$ and $b$, we let $a \asymp (1\pm\eps) b$ indicate  $(1-\eps)b \leq a \leq(1 + \eps)b$, which is convenient notation for sketching theory. 
Denote the normalization map $\x \mapsto \x/\|\x\|$ for $\x \neq 0$ by $\pio$.
For a set $U \sset \R^n$, we denote $\pio(U)$ the projection of $U\setminus\set{0}$ to the unit sphere $S^{n-1}$.
We use $\overline{U}$ for the Euclidean closure of $U$, and $\zcl{U}$ for the closure with respect to the real Zariski topology \cite{10.5555/1197095,bochnak2013real}.
The function $\log(\cdot)$ denotes the base-$e$ natural logarithm.
We write $\im(p)$ for the image of a map $p$.

Throughout the paper, $C$ is reserved for representing positive universal constants.
Its value may change within an expression, as we think of it as a placeholder for a fixed number.
For instance, we may write $C f(x) + C g(x) \leq C h(x)$ to mean there are positive constants $\ga,\gb,\gc$ such that for all $x$ it holds $\ga f(x) + \gb g(x) \leq \gc h(x)$.
We use subscripted constants $C_1,~C_2$ when necessary or helpful, and 
these subscripted constants remain the same within a proof or context.

\section{From Norming Sets to Sketching Theory}\label{sec:norming}

We start by specifying the definition of a Johnson-Lindenstrauss transform used in the rest of the paper, as there are different versions in the literature.

\begin{definition}
    [JLT]
    A random map $\S : \mathbb{R}^N \rightarrow \mathbb{R}^m$ is an $(\eps, \delta)$ {\normalfont Johnson-Lindenstrauss transform (JLT)} on a set $U \subseteq \mathbb{R}^N$ if with probability at least $1-\delta$ it holds $\|\S\x\|^2 \asymp(1\pm\eps)\|\x\|^2$ for all $\x\in U$.
\end{definition}

Note we do not require preserving the pairwise distances in $U$.  
A JLT on $U-U$ will preserve the pairwise distances in $U$.
Next, we define the notion of norming set. 
Discussion on its background can be found in, e.g., \cite{bos2018fekete}.

\begin{definition}
    [norming set] \label{def:norming}
    Given a bounded set $V\sset \R^N$, a positive integer $d$, and a real number $\omega > 1$, we say that
    a subset $Q \subseteq V$ is a $(d,\omega)$ {\normalfont norming set} of $V$ if for all polynomials $p: \R^N \rightarrow \R$ of degree at most $d$ it holds 
    \begin{equation*}
        \|p\|_V \leq \omega \|p\|_Q,
    \end{equation*} 
    where the norm of $p$ over a bounded set is defined as $\|p\|_U = \sup_{\x \in U}|p(\x)|$.
    The collection of $(d, \omega)$ norming sets of $V$ is denoted by $\cM(V, d, \omega)$.  
\end{definition}

In fact, every compact set $V$ admits a finite norming set for any $d$ and $\omega$.\footnote{To see this, apply \cref{thm:norming} to $V_0 = \R^N$\!.}
A result in \cite{ergur2019probabilistic} showed that for 
the unit ball in $\R^N$, an $\eps$ net is a $(d, \,1/(1-\eps d^2))$ norming set.
To our knowledge, it is unclear if an $\eps$ net of any compact set $V$ is a norming set for $V$, and if so how big the parameter $\omega$ should be.
For our purposes a crucial result from \cite{altschuler2023kernel} bounds the size of a $(d, \omega)$ norming set of compact subsets of equi-dimensional varieties, which we state next.

\begin{restatable}[norming set of equi-dimensional varieties \cite{altschuler2023kernel}]{theorem}{normvar}
    \label{thm:norming}
    Let $V$ be a compact\footnote{This is with respect to the Euclidean topology.} subset of an equi-dimensional variety $V_0$ in $\R^N$.
    Let $D$ and $n$ be the degree and dimension of $V_0$.
    Then for any positive integer $d$ and real number $\go > 1$, there exists a norming set $Q\in\cM(V, d, \omega)$ with cardinality satisfying
    \begin{equation}\label{eq:pablo}
        {\log |Q| \leq C_1\log D + C_1n(\log(C_2nd) - \log\log \omega)}.
    \end{equation}
\end{restatable}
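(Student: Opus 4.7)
The plan is to construct the norming set $Q$ by random sampling from a carefully chosen probability measure $\mu$ on $V$, via a strategy that combines a Remez/Brudnyi-Ganzburg-type inequality for polynomials on algebraic varieties with the $\eps$-net theorem from VC theory.

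First I would establish a Remez-type inequality: for some probability measure $\mu$ supported on $V$, and for every polynomial $p \colon \R^N \to \R$ of degree $\leq d$ with $\|p\|_V = 1$, one has
\begin{equation*}
\mu\big(\{x \in V : |p(x)| \geq 1/\omega\}\big) \;\geq\; \eta,
\qquad \eta \;\gtrsim\; \big(\log\omega \,/\, (Cnd)\big)^{Cn}.
\end{equation*}
The univariate case is the classical Remez-Chebyshev estimate via $T_d^{-1}(\omega) \asymp 1 + (\log\omega/d)^2$; its $n$-variate extension on a convex body is the Brudnyi-Ganzburg inequality, which produces exactly this $2n$-power dependence on $\log\omega/d$. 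To transfer from the flat case to a compact subset of the equi-dimensional variety $V_0$, one can apply Noether normalization: after a generic linear change of coordinates, the projection $\pi \colon V_0 \to \R^n$ onto $n$ of the coordinates is finite of degree $\leq D$, the polynomial $p|_{V_0}$ pulls back to a polynomial of degree at most $Dd$ on the image via elementary symmetric functions along the fibers, and Brudnyi-Ganzburg applies downstairs with $\mu$ chosen as the pushforward of a suitable equilibrium measure.

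Next I would bound the VC dimension of the family $\mathcal{F} = \{\{x \in V : p(x)^2 \geq 1/\omega^2\} : \deg p \leq d\}$. Each element of $\mathcal{F}$ is carved out of $V$ by a single polynomial inequality of degree $\leq 2d$, and $\mathcal{P}_{2d}|_{V_0}$ has dimension at most $D\binom{n+2d}{n}$ by Chardin's Hilbert-function bound for equi-dimensional varieties of degree $D$. Standard Warren/Milnor-type bounds then yield $\mathrm{VCdim}(\mathcal{F}) \lesssim D(nd)^n$. The $\eps$-net theorem produces, with positive probability, an i.i.d.\ sample $Q \subseteq V$ of size $|Q| \asymp (1/\eta)\,\mathrm{VCdim}(\mathcal{F}) \log(1/\eta)$ that meets every set in $\mathcal{F}$ of $\mu$-measure at least $\eta$. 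Any such $Q$ is automatically $(d,\omega)$-norming, because the first step places the superlevel set $\{|p| \geq 1/\omega\}$ of every unit-norm polynomial inside the pierced family. Substituting the bounds on $\eta$ and on $\mathrm{VCdim}(\mathcal{F})$ and simplifying logarithms recovers the claimed estimate $\log|Q| \leq C_1 \log D + C_1 n(\log(C_2 nd) - \log\log\omega)$.

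The main obstacle is the Remez step: adapting Brudnyi-Ganzburg to compact subsets of equi-dimensional varieties while tracking the explicit polynomial dependence of constants on $D$ and $n$. The singular locus of $V_0$ and real-versus-complex subtleties of the Noether projection must be handled with care, and one must verify the downstairs measure pushes forward to something that still sees the relevant superlevel sets. A cleaner alternative, likely closer to the approach of \cite{altschuler2023kernel}, is to work directly via the Christoffel function and reproducing kernel of $\mathcal{P}_d|_{V_0}$, replacing the $\eps$-net/VC argument with a matrix-Chernoff concentration bound on a finite-dimensional projection operator—this sidesteps the need for sharp constants in Remez at the cost of controlling the Christoffel function on $V$.
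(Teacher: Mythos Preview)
The paper does not actually prove this theorem: it is quoted from \cite{altschuler2023kernel}, and the only information the paper gives about the argument is that it ``relies on a bound for the Hilbert function of (the projectivization of) $V_0$.''  So the comparison point is not a proof in this paper but the Hilbert-function route in the cited reference, which controls the dimension $h=\dim(\mathcal{P}_d|_{V_0})$ and builds the norming set from that finite-dimensional data alone.

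Your Remez/Brudnyi--Ganzburg/$\eps$-net route is genuinely different, and the Remez step has a real gap that you do not close.  Brudnyi--Ganzburg is a statement about Lebesgue measure on convex bodies; transporting it through a Noether projection requires $\pi(V)\subseteq\R^n$ to contain a set on which such an inequality holds with the stated constants.  But $V$ is an \emph{arbitrary} compact subset of $V_0$: it may be finite, Cantor-like, or supported on a proper subvariety, and then no probability measure $\mu$ on $V$ will satisfy your superlevel-set lower bound with $\eta$ depending only on $(n,d,D,\omega)$.  The theorem demands a bound uniform over all such $V$, and your measure-theoretic step does not deliver that uniformity.  The elementary-symmetric-function descent is also incomplete: symmetrizing the fiber values of $p$ produces a polynomial on the base, but you still need to relate its superlevel sets back to those of $p$ upstairs on $V$, and that inversion is not carried out.

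Your own closing remark is the correct diagnosis.  The Christoffel-function/matrix-Chernoff approach works directly with the $h$-dimensional space $\mathcal{P}_d|_{V_0}$ and needs no geometric regularity of $V$ beyond compactness; that is why it succeeds where the Remez transfer stalls, and it is almost certainly the mechanism behind the cited result.
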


If $n = 0$, then the bound \eqref{eq:pablo} is still valid, interpreting $n\log(nd)$ as the limit $0$. 
The proof of \Cref{thm:norming} relies on a bound for the Hilbert function of (the projectivization of) $V_0$ which counts the number of linearly homogeneous polynomial functions on $V_0$ of each degree; see \cite{altschuler2023kernel}.

To establish the connection between norming sets and sketching theory using a random linear operator $\S$, consider the sketching task
\begin{equation}\label{eq:sketchgoal}
    \|\S\x\|^2 \asymp (1\pm\eps) \|\x\|^2
\end{equation}
for all $\x \in V$.
This is equivalent to the following condition on the norm of a degree $4$  polynomial:
\begin{equation}
    \|p\|_{\pio(V)} \leq \eps^2,~~\text{where}~~ p(\x) = \left(\|\S\x\|^2 - 1\right)^2\!\!.
\end{equation} 
Since $\pio(V)$ is a precompact subset of the variety $\zcl{\pio(V_0)}$ (the closure with respect to the Zariski topology, see \cref{sec:notation}), 
we can now use 
\Cref{thm:norming} 
to reduce \eqref{eq:sketchgoal} to a condition on a finite set $Q$.
A union bound can then be used to further reduce the condition to a single point in $\overline{\pio(V_0)}$. 
This allows us to derive a sketching dimension bound for the entire set $V$ by only analyzing the concentration behavior of the sketch operator on a single vector.
This result is summarized below in \cref{thm:main} and \Cref{cor:reducibleV}.

To better formulate our main result, we define a few notions for sketch operators.

\begin{definition}
    [ensemble of sketch operators]
    An {\normalfont ensemble of sketch operators} on $\R^N$ is a sequence of laws of random matrices $E = \set{E_m}_{m\in\Z_+}$, where $E_m$ is the law of an $m\times N$ random matrix.
\end{definition}

As an example, the Gaussian sketch ensemble is given by $E_m \sim m^{-1/2} \G_m$, where $\G_m$ has $m$ rows populated with i.i.d. standard Gaussian entries.
The next definition characterizes the concentration behavior of a sketching ensemble on a single vector.
\begin{definition}[exponential restricted isometry property]
    For $U\sset\R^N$, we say an ensemble $E$ on $\R^N$ has the {\normalfont exponential restricted isometry property (eRIP)} on $U$ with tail function $\phi$, if for any $m\in\Z_+$, $\eps\in (0, 1)$, and any fixed $\x\in U$,
    \begin{equation*}
        \pr_{\S\sim E_m}\left(\|\S\x\|^2\asymp(1\pm\eps)\|\x\|^2\right) \geq 1 - e^{-\phi(m, \eps)}.
    \end{equation*}
    We denote this by $E\sim \erip(U, \phi)$.
\end{definition}

\begin{remark}
    Note that for some sketch operators, the tail bound function $\phi$ may depend on e.g., the ambient dimension $N$. We exclude such potential dependencies in the notation, viewing them as fixed constants in the $\phi$ function.
\end{remark}

Note that the larger the $\phi$ function, the stronger the concentration.
Many commonly used sketch operators have $\erip$ on all of $\R^N$. 
For example, it is well known that the Gaussian sketch ensemble has eRIP on $\R^N$ with $\phi(m, \eps) = Cm\eps^{2}$.
More generally, if a bound on 
\begin{equation*}
    {\sup_{\x\in \overline{\pio(U)}}}\E_{\S\sim E_m} \big|\|\S\x\|_2^2 - 1\big|^p
\end{equation*}
is available, e.g., when $\S$ satisfies the so called (strong) JL moment property (see, e.g., \cite{woodruff2014sketching}), then a $\phi$ function can be derived for $\S$ using Markov's inequality.
We give concrete examples of this in \cref{sec:past}.

Our main result on sketching varieties and polynomial images based on norming set theory can now be stated.  It is stated for irreducible varieties and polynomial images.  An easy generalization with reducible varieties (i.e., finite unions of irreducible varieties) is in \Cref{cor:reducibleV}.

\begin{theorem}[restate = main, name = sketching irreducible varieties and polynomial images]\label{thm:main}
    Suppose we sketch set $V\sset \R^N$ with sketching ensemble $E$ such that $E\sim\erip\Big(\overline{\pio(V)}, \phi\Big)$.
    If $V$ is a subset of an irreducible variety $V_0 \sset \R^N$ of degree $D$ and dimension $n$,
    then for any $\eps, \delta\in (0, 1)$, $\S \sim E_m$ is an $(\eps, \delta)$ JLT on $V$ 
    provided that
    \begin{equation}\label{eq:Vbnd}
        \phi\left(m, \frac{\eps}{\sqrt{2}}\right) \geq {C_1\log D + C_1 n \log(C_2n) + \log(1/\gd)}.
    \end{equation}
    If instead $V$ is a subset of the image of polynomial map from $\R^n$ to $\R^N$ whose coordinate functions each have a degree of at most $d$,
    then the above assertion remains true with $\log D$ in \eqref{eq:Vbnd} replaced by $n\log d$.
\end{theorem}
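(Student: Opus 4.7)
My plan is to turn the sketching requirement into a supremum-norm bound for a degree-$4$ polynomial over a compact subset of a variety, apply \Cref{thm:norming} to reduce that supremum to a finite norming set, and conclude by an eRIP union bound. Since $\|\S\x\|^2 \asymp(1\pm\eps)\|\x\|^2$ is positively homogeneous in $\x$, the guarantee for all $\x\in V\setminus\{0\}$ is equivalent to $\bigl|\|\S\x\|^2 - 1\bigr| \leq \eps$ for every $\x$ in the compact set $\bar V := \overline{\pio(V)}$. Setting $p_\S(\x) := (\|\S\x\|^2 - 1)^2$, a degree-$4$ polynomial in $\x$, the task reduces to $\|p_\S\|_{\bar V} \leq \eps^2$.

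To apply \Cref{thm:norming} I first embed $\bar V$ in an equi-dimensional variety $V_0'$ with $\dim V_0' \leq n$ and $\log \deg V_0' = O(\log D)$. Take $V_0' := \tilde V_0 \cap S^{N-1}$, where $\tilde V_0 := \zcl{\cup_{t \in \R} t V_0}$ is the affine cone over $V_0$. Irreducibility of $V_0$ makes $\tilde V_0$ irreducible of dimension at most $n+1$ and degree at most $D$; since $\tilde V_0$ contains the origin it is never contained in the sphere, so Krull's Hauptidealsatz applied to $\|\x\|^2 - 1 = 0$ produces an equi-dimensional intersection of dimension at most $n$, and by B\'ezout its degree is at most $2D$. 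Applying \Cref{thm:norming} with $d = 4$ and $\omega = 2$ then yields a norming set $Q \in \cM(\bar V, 4, 2)$ with $\log |Q| \leq C_1 \log D + C_1 n \log(C_2 n)$.

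For each $\q \in Q \subseteq \bar V \subseteq \overline{\pio(V)}$ the eRIP hypothesis gives $\pr\bigl(\bigl|\|\S\q\|^2 - 1\bigr| \leq \eps/\sqrt{2}\bigr) \geq 1 - e^{-\phi(m,\, \eps/\sqrt{2})}$. A union bound across $Q$ ensures $\|p_\S\|_Q \leq \eps^2/2$ except with probability at most $|Q|\,e^{-\phi(m,\, \eps/\sqrt{2})}$; on the complementary event, the norming property upgrades this to $\|p_\S\|_{\bar V} \leq \omega \cdot \eps^2/2 = \eps^2$, which is the required bound. Imposing $|Q|\,e^{-\phi(m,\, \eps/\sqrt 2)} \leq \delta$ and inserting the cardinality bound yields exactly \eqref{eq:Vbnd}.

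For the polynomial image case I would replace $V_0$ by $\zcl{\im(p)}$, an irreducible variety of dimension at most $n$ and degree at most $d^n$ (the preimage under $p$ of a generic codimension-$\dim\zcl{\im(p)}$ affine section is a complete intersection in $\R^n$ cut by at most $n$ equations of degree at most $d$, giving a B\'ezout count of at most $d^n$); substituting $\log D \to n\log d$ yields the stated replacement. The main technical obstacle is the algebro-geometric bookkeeping in constructing $V_0'$ with the correct dimension and degree bounds and verifying equi-dimensionality; beyond that, the norming-set reduction and the eRIP union bound are routine.
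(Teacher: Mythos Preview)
Your proposal is correct and follows essentially the same approach as the paper: reduce to bounding the degree-$4$ polynomial $p_\S(\x)=(\|\S\x\|^2-1)^2$ on $\overline{\pio(V)}$, embed $\overline{\pio(V)}$ in an equi-dimensional variety of dimension $\le n$ and degree $O(D)$, invoke \Cref{thm:norming} with $(d,\omega)=(4,2)$, and finish with an eRIP union bound.  The polynomial-image case (passing to $\zcl{\im(p)}$ and bounding its degree by $d^n$ via B\'ezout) is likewise identical to the paper's argument.

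The only substantive difference is in how the ambient variety is built.  You take the affine cone $\tilde V_0=\zcl{\bigcup_t tV_0}$ and intersect with the sphere, using Krull's Hauptidealsatz to get equi-dimensionality and B\'ezout for the degree; this handles the cone and non-cone cases uniformly.  The paper instead works directly with $\zcl{\pio(V_0)}$, splitting into cases: if $V_0$ is a cone it just takes $V_0\cap S^{N-1}$; otherwise it realizes $\zcl{\pio(V_0)}$ as the image of the auxiliary variety $W=\{(\x,\lambda):\x\in V_0,\ \lambda^2=\|\x\|^2\}$ under $\psi(\x,\lambda)=\x/\lambda$, bounding degree by a pullback argument.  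Your route is arguably cleaner, but the assertion $\deg(\tilde V_0)\le D$ deserves a line of justification (it follows because $\tilde V_0$ is the affine cone over the image of $\bar V_0$ under projection from the origin in $\mathbb{P}^N$, and linear projection does not increase degree); you rightly flag this bookkeeping as the main technical point.
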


\begin{proof}
    [Proof Outline]
    The full proof involves technicalities in algebraic geometry, and can be found in \cref{sec:pfmain}.

    Define $p(\x) = (\|\S\x\|^2 - 1)^2$,  a degree $4$ polynomial. 
    Since $\overline{\pio(V)} \sset \zcl{\pio(V_0)}$, using \cref{thm:norming}, we can prove there is a norming set $Q\in \cM\Big(\overline{\pio(V)}, 4, 2\Big)$ of cardinality 
    \begin{equation*}
        \log|Q| \leq {C_1\log D + C_1n\log(C_2n)}.
    \end{equation*} 
    Applying a union bound to the points in $Q$, we have $\|p\|_Q \leq \eps^2/2$ with probability at least $1 - |Q|e^{-\phi(m, \eps/\sqrt{2})}$.
    Consequently, with at least the same probability, $\|p\|_{\pio(V)} \leq \eps^2$.
    In order for the probability lower bound to be no less than $1 - \delta$,
    it suffices to have
    \begin{equation*}
        \phi(m, \eps/\sqrt{2}) \geq \log |Q| + \log(1/\delta).
    \end{equation*}
    This condition is precisely \eqref{eq:Vbnd}.
    In the polynomial map case, by B{\'e}zout's theorem the degree is bounded by $\log D = n\log d$.
    The proof is thus complete.
\end{proof}

\begin{remark}\label{rmk:skip_union}
    In the argument for \cref{thm:main}, a union bound is used to ensure $\|\S\x\|^2 \asymp (1\pm\eps)\|\x\|^2$ for all $\x\in Q$.
    Essentially, we only need $\S$ to be a JLT on the finite set $Q$.
    Thus, in \cref{thm:main} the eRIP condition on the ensemble could be replaced by the condition that $\S$ is a JLT on a finite (unspecified) set $Q$. 
    We choose the eRIP formulation because it is often easier to verify, and in many cases the bound given by the eRIP condition plus a union bound is identical to the bound derived by requiring $\S$ to be a JLT on $Q$.
\end{remark}

To illustrate how to apply \cref{thm:main}, let us 
derive specific sketching dimension bounds for two types of frequently used sketch operators, the sub-Gaussian ensemble and FJLT \nolinebreak ensemble.

\begin{example}
    [sub-Gaussian ensemble]\label{exp:subG}
    Let $E_m \sim m^{-1/2} \G_m$, where $\G_m$ is a random matrix with $m$ rows whose entries are independent, mean zero, unit variance, sub-Gaussian random variables.
    Let the $\psi_2$ norm of the entries be bounded by $K \geq 1$.
    Then Bernstein's inequality implies that for $\eps < 1$ it is valid to take
    \begin{equation*}
        \phi(m, \eps) = C m \cdot \min(\eps/K, \eps^2/K^2) = C m \eps^2/K^2.
    \end{equation*}
    Thus, in order to sketch a subset $V$ of a variety in \cref{thm:main} with probability at least $1-\delta$, the sufficient sketching dimension is
    \begin{equation*}
        m_v = C_1\eps^{-2} K^2 \cdot \left(\log D + {n\log(Cn)} + \log(1/\delta)\right),
    \end{equation*}
    For a polynomial map $p$ from $\R^n$ to $\R^N$ whose coordinate maps have degree at most $d$, the sufficient sketching dimension is
    \begin{equation*}
        m_p = C_2\eps^{-2}K^2 \cdot ({n\log(Cnd)} + \log(1/\delta)).
    \end{equation*} 
\end{example}

\begin{example}
    [FJLT ensemble] 
    Let $E$ be the FJLT ensemble on $\R^N$.
    This means that a realization $\S_m\sim E_m$ is given by $\S_m = \sqrt{\frac{N}{m}}\P_m\H\D$, where $\D$ is a diagonal matrix of Rademacher variables, $\H$ is the Walsh-Hadamard transform, and $\P_m$ is a uniform sampling of $m$ rows.
    A recent result in \cite[Corollary 2.5]{iwen2022fast} shows $\S_m$ is a JLT on $k$ vectors in $\R^N$ with probability at least $1-\delta$ provided  
    \begin{equation*}
        m \geq C \eps^{-2} \log(k/\delta)\cdot \Big[\log^2\left(\eps^{-1} \log(k/\delta)\right) \cdot \log N + \log(1/\delta)\Big]. 
    \end{equation*}
    Thus, as pointed out in \cref{rmk:skip_union}, we can use this result directly. 
    Consequently, to embed the subset $V$ using FJLT with probability at least $1-\delta$, the sufficient sketching dimension is
    \begin{equation*}
        m_v = C_1\eps^{-2} \Delta_v \cdot \left[\log^2(\eps^{-1}\Delta_v)\log N + \log(1/\delta)\right],
    \end{equation*}
    where $\Delta_v = {C\log D + C n \log(Cn)} + \log(1/\delta)$.
    For a polynomial map $p:\R^n\rightarrow \R^N$ with coordinate functions of degree at most $d$, the sufficient sketching dimension for $\im(p)$ is 
    \begin{equation*}
        m_p = C_2\eps^{-2} \Delta_p \cdot \left[\log^2(\eps^{-1}\Delta_p)\log N + \log(1/\delta)\right],
    \end{equation*}
    where $\Delta_p = {C n \log(Cnd)} + \log(1/\delta)$.
\end{example}

Thus for sub-Gaussian and FJLT sketches, compared to  \cite{zhang2023covering} which derives the sketching dimension using the covering number bounds for $\im(p)$,
the new bound here gives a slight improvement.  It updates the sketching dimension from $\wt\cO(nd\log(n))$ to $\wt\cO(n\log(nd))$.

{\cref{cor:reducibleV} below extends} \cref{thm:main} to the case of reducible varieties.  
We simply take a union of norming sets corresponding to each irreducible component to get a norming set corresponding to the whole variety. 
As a particular case of \cref{cor:reducibleV}, if all irreducible components of the variety $V_0$ have the same dimension so that $V_0$ is equi-dimensional, then we recover the bound \eqref{eq:Vbnd} in \cref{thm:main}.

\begin{corollary} 
    [sketching reducible varieties]\label{cor:reducibleV}
    Let $V_0\sset \R^N$ be a variety, and $V\sset V_0$ be the set to be sketched.
    Suppose ensemble $E\sim \erip(\overline{\pio(V)}, \phi)$ is used to sketch $V$.
    Let $\set{V_i}_i$ be the finite collection of irreducible components of $V$, with degree $D_i$ and dimension $n_i$ respectively.
    Then for any $\eps, \delta\in (0, 1)$, $\S \sim E_m$ is an $(\eps, \delta)$ JLT on $V$ 
    provided that
    \begin{equation}\label{eq:redVbnd}
        \phi\left(m, \frac{\eps}{\sqrt{2}}\right) \geq \log \sum\nolimits_i {\left({D_i}^{C_1} \cdot {(C_2n_i)}^{C_1n_i}\right)} + \log(1/\gd).
    \end{equation}
\end{corollary}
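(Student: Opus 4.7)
The plan is to mimic the proof of \cref{thm:main} exactly, replacing the single application of \cref{thm:norming} with one application per irreducible component and then taking the union. Write $V_0 = \bigcup_i W_i$ as a decomposition into irreducible components (with $W_i$ having degree $D_i$ and dimension $n_i$), and set $V_i := V \cap W_i$ so that $V = \bigcup_i V_i$ and each $V_i$ is a subset of the irreducible variety $W_i$. By \cref{thm:norming} applied to each $\overline{\pio(V_i)}$, which is a compact subset of the irreducible variety $\zcl{\pio(W_i)}$, there exists a $(4, 2)$ norming set $Q_i \in \cM(\overline{\pio(V_i)}, 4, 2)$ of cardinality at most $D_i^{C_1} \cdot (C_2 n_i)^{C_1 n_i}$.

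Next, I would show that $Q := \bigcup_i Q_i$ is a $(4, 2)$ norming set for $\overline{\pio(V)}$. This is a one-line check: for any polynomial $p$ of degree at most $4$,
\[
\|p\|_{\overline{\pio(V)}} \;=\; \max_i \|p\|_{\overline{\pio(V_i)}} \;\leq\; 2 \max_i \|p\|_{Q_i} \;=\; 2\|p\|_Q,
\]
where the first equality uses that $\overline{\pio(V)} = \bigcup_i \overline{\pio(V_i)}$ is a finite union and the sup-norm turns unions into maxima. The total cardinality is at most $|Q| \leq \sum_i D_i^{C_1}(C_2 n_i)^{C_1 n_i}$.

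Finally, I would apply exactly the eRIP-plus-union-bound argument used in the proof of \cref{thm:main} to the degree $4$ polynomial $p(\x) = (\|\S\x\|^2 - 1)^2$. With probability at least $1 - |Q|\,e^{-\phi(m, \eps/\sqrt{2})}$, we have $\|p\|_Q \leq \eps^2/2$; the norming property then yields $\|p\|_{\overline{\pio(V)}} \leq \eps^2$, which is equivalent to $\|\S\x\|^2 \asymp (1 \pm \eps)\|\x\|^2$ on all of $V$. Requiring the failure probability to be at most $\delta$ gives exactly \eqref{eq:redVbnd}.

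The only conceptually new step compared to \cref{thm:main} is the verification that a union of norming sets for the components is a norming set for the whole, and this is essentially trivial once observed. No new algebraic-geometric input beyond what powers \cref{thm:main} is required, so I do not anticipate a substantive obstacle; the work is simply in being careful about decomposing $V$ through the components of $V_0$ and tracking the sum over $i$ inside the logarithm.
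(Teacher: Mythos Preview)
Your proposal is correct and follows essentially the same route as the paper's proof: find a $(4,2)$ norming set $Q_i$ for each $\overline{\pio(V\cap W_i)}$ via the argument of \cref{thm:main}, take $Q=\bigcup_i Q_i$, and repeat the eRIP--union-bound step. You are in fact slightly more explicit than the paper in verifying that a union of norming sets for the pieces is a norming set for the whole (the paper simply asserts $Q\in\cM(\overline{\pio(V)},4,2)$); one small slip is calling $\zcl{\pio(W_i)}$ ``irreducible''---the proof of \cref{thm:main} only guarantees it is equi-dimensional, which is all \cref{thm:norming} needs.
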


\begin{proof}
    Following the argument of \cref{thm:main}, we find norming sets $Q_i\in \cM\Big(\overline{\pio(V_i \inter V)}, 4, 2\Big)$ of size
    $|Q_i| \leq {{D_i}^{C_1} \cdot {(C_2n_i)}^{C_1n_i}}$.
    Then $Q = \Union_i Q_i \in \cM\Big(\overline{\pio(V)}, 4, 2\Big)$ with cardinality
    \begin{equation*}
        \log |Q| \leq \log \sum\nolimits_i {\left({D_i}^{C_1} \cdot {(C_2n_i)}^{C_1n_i}\right)}.
    \end{equation*}
    Repeating the union bound in the proof outline for \cref{thm:main} we obtain \eqref{eq:redVbnd}.
\end{proof}

Thanks to the norming set theory, \cref{thm:main} and \cref{cor:reducibleV} are quite flexible in that they are applicable to virtually any sketch operator, including tensor structured and sparse sketches. 
In comparison, developing sketching theory for such sketch operators using covering numbers often requires substantial work \cite{oymak2018isometric,li2017near}, tailored to the given type of operator at hand.

\section{Sketching Polynomial Images and Varieties by Tensor Structured Sketches}\label{sec:median}

In this section, we demonstrate the 
broad applicability of \cref{thm:main}  to structured sketch operators.  Let $V$ be a subset of an equi-dimensional variety or the image of a polynomial map that we aim to sketch.
In many applications, the set $V$ itself has 
special structure.  For example, consider sparsity structure in the applications of compressed sensing, or tensor based structure in tensor decomposition, polynomial kernel approximation, moment based model fitting, and certain neural networks (see e.g., \cite{sherman2020estimating,altschuler2023kernel,kolda2009tensor,ahle2020oblivious,zhang2023moment,jin2024scalable,pereira2022tensor,kacham2023polysketchformer}). 
In such cases, it would be desirable computationally to use compatibly structured sketch operators.
Here we focus on the case of tensor structure, recalling the model problem in \cref{subsec:aim}.
To embed tensor structured $V$, we would like to use a tensor structured sketch operator that can be applied to $\x \in V$ efficiently.
For example, if $V \sset \R^{n^d}$ is the set of low CP rank tensors, and $\x$ is given in a factorized form $\cp(\A_1,\ldots,\A_d)$, then applying a tensor product sketch $\S = \bigotimes_{i = 1}^d\S_i$ to $\x$ only takes $\cO(n)$ flops and storage, whereas a general Gaussian sketch matrix of the same output dimension would require $\cO(n^d)$ flops and storage.
Other common choices of tensor structured sketch operators include KFJLT \cite{malik2020guarantees,jin2021faster}, tensor sketch (TS) \cite{pagh2013compressed}, Khatri-Rao (KR) sketch \cite{ahle2020oblivious}, etc.
Theory for sketch operators that take a general tensor network structure is established in \cite{ahle2020oblivious}. See also \cite{ma2022cost} for follow up developments and efficient applications of such network sketches.

\subsection{Challenge and previous attempts}\label{sec:past}

Recall the requirements (i) and (ii) for a good sketch operator in \cref{subsec:aim}.
Despite their efficiency, the challenge with tensor structured sketch operator is their accuracy guarantees are typically weak.  
For a tensor structured sketch operator of tensor order $d$ (say  KFJLT or a KR sketch), best known sketching dimension bounds for these operators have a $\phi$ function of order
\begin{equation}\label{eq:phid}
    \phi(m, \eps) = \cO(m^{1/d}),
\end{equation}
where we treat everything but $m$ as constants.
See \cite{ahle2020oblivious,bamberger2022johnson}.
Equivalently, for such operators to be a JLT over $P$ points, the sketching dimension must to be $\Omega(\log^d P)$.
The next example provides a justification for \eqref{eq:phid} through the lens of the (strong) JL moment property.

\begin{example}
    [moment bound implies eRIP]\label{exp:momerip}
    Fix $U\sset \R^N$ and the ensemble $E = \set{E_m}_m$. 
    Suppose for all $m$, the random matrix $\S\sim E_m$ satisfies the following moment bound.
    There exists $d \geq 1$ such that for all\footnote{Actually, this is not necessary. 
    We only require the moment bound to hold at the optimal $p$ minimizing the right-hand side of  Markov's inequality.} $p \geq 1$ it holds
    \begin{equation}\label{eq:mombnd}
        \sup_{\x\in \overline{\pio(U)}}\left(\mathbb{E}_{\S \sim E_m} \left|\|\S\x\|_2^2 - 1\right|^p\right)^{1/p}  \, \leq \, C_d \left(\frac{p^d}{m} + \sqrt{\frac{p}{m}}\right),
    \end{equation} 
    where $C_d$ is some constant potentially depend on $d$.
    The bound \eqref{eq:mombnd} is common for tensor structured sketch $\S$.
    For example, if the rows in $\S$ are independent sub-Weibull measurements,
    e.g., if $\S$ is a Khatri-Rao sketch, where the rows are independent and have the form $(\a_1\otimes\cdots\otimes \a_d)^\top$ with each $\a_i$ is a sub-Gaussian vector, then \eqref{eq:mombnd} applies \cite{latala1997estimation,ahle2020oblivious,bong2023tight}.

    From \eqref{eq:mombnd}, in order to obtain 
    \begin{equation*}
        \pr\left(\|\S\x\|^2 \asymp (1\pm\eps)\|\x\|^2\right) \geq 1 - \delta,
    \end{equation*}
    one applies the Markov's inequality and optimizes over $p$ to arrive at a sufficient sketching dimension
    \begin{equation*}
        m = \max\set{C_{1,d} \eps^{-1}\log^d(1/\delta),~C_{2,d} \eps^{-2}\log(1/\delta)}
    \end{equation*}
    for some constants $C_{1,d}$ and $C_{2,d}$ (see, e.g., the computation in the appendix of \cite{ahle2020oblivious}). 
    Replacing $\delta$ with $e^{-\phi}$, we deduce a sufficient $\phi$ function of order
    $\phi(m, \eps) = \cO(m^{1/d})$.
\end{example}

Though \cref{exp:momerip} only derives sufficient conditions, recent results also show the bound $\Omega(\log^d P)$ is necessary for KFJLT operators of order $d$ to embed $P$ points \cite{bamberger2022johnson}. So, the $m^{1/d}$ behavior in \eqref{eq:phid} is much expected.

Let us now explain why the order of $m^{1/d}$ is very bad.  
Consider the model problem of sketching the set of all rank-1 tensors of shape $n^d$.
Since the dimension of this set is $b = \Theta(nd)$, for tensor structured sketch operators with a $\phi$ function as in \eqref{eq:phid}, the sufficient sketching dimension is 
\begin{equation} \label{eq:embedtensor}
    \phi(m, \eps/\sqrt{2}) \geq C b \log(b d),
\end{equation}
which implies $m$ is at least $\Omega(b^d) = \Omega(n^d)$.
In other words, there is no compression at all!

Some attempts have been made to address the $m^{1/d}$ scaling in the $\phi$ function, or equivalently, to improve the sufficient sketching dimension from $\log(1/\delta)^d$ for an order $d$ tensor structured sketch to have a failure rate at most $\delta$.
A noticeable stream of work on this is \cite{ahle2020oblivious,ma2022cost}.
The idea is to use tensor structured sketches of order only $2$, and use a binary tree formed by these order 2 sketches to embed $\R^{n^d}$ tensors to $\R^m$.
The paper \cite{ahle2020oblivious} proved that using a binary tree of order 2 tensor sketches, the dependence of the final sketching dimension $m$ on failure probability $\delta$ is given by the dependence of each order 2 sketch in the tree, thus reducing the sketching dimension to $m(\delta) = \cO\left(\log^C(1/\delta)\right)$.
Equivalently, $\phi(m) = \Omega(m^{1/C})$.
However, even with this hierarchical sketching scheme, the final output dimension has to be $m = \Omega(b^C) = \Omega(n^C)$ to satisfy \eqref{eq:embedtensor}. Since $C \geq 2$, which is the order of the sketch nodes in the tree, this sketching dimension is still too large to be considered optimal. 
Another way to seek a near optimal sketching dimension of $m = \wt\cO(b)$ (since the set to be sketched has a dimension $b$) would be to use a binary tree of $\R^{n^2} \to\R^m$ and $\R^{m^2}\to\R^m$ sketch operators without tensor structures.
However, if the operators $\S$ in the tree are sub-Gaussian sketches, this will incur a computational cost of $\Omega(m^3) = \Omega(n^3)$ at each tree node; if the operators are FJLT, to our knowledge the optimal known sketching dimension bound for FJLT has a dependence $m(\delta) = \Omega(\log^2(1/\delta))$ (see, e.g., \cite{cohen2015optimal,ahle2020oblivious}), and hence the final output dimension has to be $\Omega(b^2)$.\footnote{The result in \cite{iwen2022fast} shows the sketching dimension of FJLT for embedding $P$ points is $m = \wt\cO(\log(P/\delta))\cdot \log(1/\delta)$ using the RIP theory, which is better than $\cO(\log^2(P/\delta))$ from bounding the moments. But it is unclear if the RIP result can be extended to a tensor network of FJLT operators to ensure a $\wt\cO(\log(P))$ dependence on $P$.}
Hence, neither of these is perfect.

\subsection{Approach of median sketch}\label{sec:new}

To obtain tensor structured sketches that are both efficient and accurate, we propose a new approach called median sketch.  In fact, leveraging norming set theory we will establish bounds to sketch the set $V \subseteq \mathbb{R}^N$, a subset of a variety or image of polynomial map, using any sketch.

Intuition behind the median sketch is long established.
It is well known that if a random sketch succeeds with $\cO(1)$ chance, then independently repeating the sketch for $\cO(\log (1/\delta))$ times can improve the failure chance from $\cO(1)$ to $\delta$. 
As such, the median sketch approach proceeds as follows.  
We generate a committee of several sketch operators, $\S_1,\ldots,\S_{2k+1}$ independently from $E_m$.   We measure $a_i = \|\S_i\x\|$, and take $\S_{i^*}\x$ with median norm as our sketched output.

To be more precise, 
given a sequence of numbers $a_1,\ldots,a_\ell$, let
\begin{equation*}
    i^* = \argmed_i a_i
\end{equation*}
return the index corresponding to the median of the array.
If there is a tie among the numbers, take the smallest $i$ by convention.
We always guarantee that the size $\ell$ is odd, so that $i^*$ is well defined.
We thus denote the median by
\begin{equation*}
    a_{i^*} = \med_i a_i.
\end{equation*}
We now give full details of the proposed median sketch approach in \cref{alg:median}.

\begin{algorithm}[!h]
    
    \textbf{Input:} sketch operators $\S_1,\ldots,\S_{2k+1}$, vector $\x$\\
    \textbf{Output:} sketched vector $\y$ so that $\|\y\| \approx \|\x\|$
    
    \vspace*{1ex}
    \begin{algorithmic}[1]
        \caption{Sketch using the median of the committee}\label{alg:median}
        \Function{MedianSketch}{}
        \State compute $\y_i \gets \S_i \x$ and $a_i\gets \|\y_i\|$ for $i = 1,\ldots,2k+1$
        \State $i^* \gets \argmed_i a_i$
        \EndFunction
        \\
        \Return $\y_{i^*}$
    \end{algorithmic}
\end{algorithm}

A caveat of median sketch is that the sketching operation is no longer a linear map, but a piecewise linear map.  This turns out to be necessary to obtain a favorable sketching dimension (\cref{thm:medsketch}).
To remind ourselves of the nonlinearity, when the committee is understood from context, we denote the sketching operation as $\y = \tilde S(\x)$.
The map is still scaling homogeneous, i.e.,
for $a\in\R$ and $\x \in \R^N$, $\tilde S(a\x) = a\tilde S(\x)$.
In \cref{alg:medjlt}, we show how to estimate various pairwise distances in an infinite set $V$ using median sketch, in spite of the loss of linearity.

\begin{algorithm}[!h]
    
    \textbf{Input:} sketch operators $\S_1,\ldots,\S_{2k+1}$, data points $ \x_i \in  V$ ($i =1, \ldots, P$)\\
    \textbf{Output:} all sketched pairwise distances $\wh{d}_{ij} \approx d(\x_i, \x_j)$
    
    \vspace*{1ex}
    \begin{algorithmic}[1]
        \caption{Fast pairwise distance evaluation of datasets in $V$ using median sketch}\label{alg:medjlt}

        \Function{MedianJLT}{}
            \For{$i = 1,\ldots,P$}
            \Comment{sketch the dataset}
                \State compute the profiles $\y_{ij}\gets \S_j \x_i$ for all $j = 1,\ldots,2k+1$
            \EndFor
            \For{$i = 1,\ldots,P$}
                \For{$j = i+1,\ldots,P$}
                \Comment{compute the sketched pairwise distance $\wh d_{ij}$}
                    \State $a_s \gets \|\y_{is} - \y_{js}\|$, $s = 1,\ldots,2k+1$
                    \State $s^*\gets \argmed_s a_s$
                    \State $\wh d_{ij}\gets a_{s^*}$
                \EndFor
            \EndFor
        \EndFunction
        \\
        \Return distance array $\wh d_{ij}$
    \end{algorithmic}
\end{algorithm}

In \cref{alg:medjlt}, if the committee successfully preserves norms on the set $V-V$, i.e., for any $\u\in V-V$ it holds that $\|\tilde S(\u)\|^2 \asymp (1\pm\eps)\|\u\|^2$, then $\wh d_{ij} \asymp (1\pm\eps) \cdot d(\x_i,\x_j)$ for $\x_i, \x_j \in V$.
If the sketching dimension of each $\S_i$ is $m$ and the ambient dimension of $V$ is $N$, then up to the overhead of computing the sketch profile whose complexity is linear in $P$, the total time to compute all $\wh d_{ij}$ is $\cO(kmP^2)$.
Without sketch, this cost can be as large as $\cO(NP^2)$.  Therefore, the benefit of the median sketch depends on the size of $km$ (which is independent of $P$), i.e., the total number of linear measurement required by median sketch.
The next main result provides a bound on this quantity.

\begin{restatable}[sketching dimension bound of median sketch]{theorem}{medsketch}
    \label{thm:medsketch}
    Let $V_0$ be a variety of dimension $n_v$ and degree $D$.
    Let $V$ be the subset of $V_0$ to be sketched.
    Let the desired sketching error be $\eps\in (0, 1)$.
    Let $\S\sim E_m$ be a random matrix such that 
    \begin{enumerate}
        \item for any $\S$ in the support of $E_m$ and $\x\in {\pio(V)}$, $\|\S\x\| \leq M$ (see also \cref{rmk:Mbnd}); and
        \item for any fixed $\x \in \overline{\pio(V)}$ and some $\theta > \log 4$,
        \begin{equation}\label{eq:theta}
            \pr\left(\|\S\x\|^2 \geq 1+\frac{\eps}{2}\right) \leq e^{-\theta}~\text{and}~\pr\left(\|\S\x\|^2 \leq 1-\frac{\eps}{2}\right) \leq e^{-\theta}.
        \end{equation}
    \end{enumerate}
    Then for an i.i.d. committee $\S_1,\ldots,\S_{2k+1}\sim E_m$, the corresponding median sketch map $\tilde S$ satisfies
    \begin{multline*}
        \pr\big(\forall \x\in V,~\|\tilde S(\x)\|^2 \asymp (1\pm\eps)\|\x\|^2\big) \\
        \geq 
        1 - \exp\left[-C_1(\theta - \log 4)k + {C_2}\log D + C_3 n_v\log\left(\frac{n_vMk}{\eps}\right)\right].
    \end{multline*}
    If instead $V_0$ is the image of a polynomial map with coordinate functions of degree at most $d$, then the above inequality remains valid by setting $\log D =  {n_v\log d}$.

    In particular, for both the variety and the polynomial map cases, taking $M \geq n_v$, $\theta = 1+\log 4$, and assuming $\log D = \cO(n_v\log n_v)$, then for any $\delta \in (0, 1)$,
    \begin{equation*}
        \pr\big(\forall \x\in V,~\|\tilde S(\x)\|^2 \asymp (1\pm\eps)\|\x\|^2\big) \geq 1 - \delta
    \end{equation*}
    provided that $k \geq C_4 \left(n_v\log \left(M/\eps\right) + \log (1/\delta)\right)$.
\end{restatable}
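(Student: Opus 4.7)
My plan is to mirror the strategy used for \cref{thm:main} but account for the nonlinearity of $\tilde S$: a pointwise median-of-means amplification across the committee will replace the sketch-by-sketch tail bound, and the norming-set reduction of \cref{thm:norming} will extend the resulting control from a finite test set to all of $\pio(V)$. By scaling homogeneity of $\tilde S$ it suffices to bound $\bigl|\|\tilde S(\x)\|^2-1\bigr|$ on $\overline{\pio(V)}\subseteq \overline{\pio(V_0)}$.

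First, for each fixed $\x\in\overline{\pio(V)}$, assumption~(ii) together with a binomial tail estimate on the number of committee members satisfying $\|\S_i\x\|^2>1+\eps/2$ (and symmetrically for the lower tail) will give
\begin{equation*}
\pr\!\left(\bigl|\med_i\|\S_i\x\|^2 - 1\bigr| > \eps/2\right)\,\le\, 2\binom{2k+1}{k+1}e^{-\theta(k+1)}\,\le\, C\exp\!\left(-(\theta-\log 4)k\right),
\end{equation*}
which is the standard median-of-means amplification of the single-point tail. Next, I will invoke \cref{thm:norming} on $\overline{\pio(V)}\subseteq \overline{\pio(V_0)}$ with polynomial degree $d=2$ and a norming parameter $\omega>1$ calibrated so that $-\log\log\omega\asymp\log(Mk/\eps)$, producing a finite $Q$ with $\log|Q|\le C_2\log D + C_3 n_v\log(n_v Mk/\eps)$ (with B\'ezout replacing $\log D$ by $n_v\log d$ in the polynomial-image case).

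Applying the pointwise bound to every $\x_0\in Q$ and union-bounding will yield $\bigl|\med_i\|\S_i\x_0\|^2-1\bigr|\le\eps/2$ for all $\x_0\in Q$, except on an event of probability at most $C|Q|\exp(-(\theta-\log 4)k)$. To extend this to all of $\pio(V)$, I will use that the median is $1$-Lipschitz in the $\ell^\infty$-metric on its inputs, so
\begin{equation*}
\bigl|\med_i\|\S_i\x\|^2 - \med_i\|\S_i\x_0\|^2\bigr|\,\le\,\max_i\bigl|\|\S_i\x\|^2 - \|\S_i\x_0\|^2\bigr|,
\end{equation*}
combined with polarization and assumption~(i) to bound each per-sketch term by $2M\|\S_i(\x-\x_0)\|$. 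The remaining factor $\|\S_i(\x-\x_0)\|$ is then controlled via the appropriate form of assumption~(i) (cf.\ \cref{rmk:Mbnd}) applied on the normalized chord $(\x-\x_0)/\|\x-\x_0\|\in\overline{\pio(V_0-V_0)}$, together with the norming inequality for each degree-$2$ polynomial $q_i(\x)=\|\S_i\x\|^2$; the calibration of $\omega$ above keeps the extension error below $\eps/2$. Combining with the union bound gives the exponent $-C_1(\theta-\log 4)k + C_2\log D + C_3 n_v\log(n_v Mk/\eps)$, and the ``in particular'' bound follows by substituting $\theta=1+\log 4$, absorbing $\log D\le Cn_v\log n_v$ under the assumption, and solving for $k$.

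The hard part will be this extension step: unlike in \cref{thm:main}, $\|\tilde S(\x)\|^2$ is not polynomial in $\x$ (the median operation is only piecewise quadratic), so one cannot directly apply \cref{thm:norming} to $(\|\tilde S(\x)\|^2-1)^2$. Instead, the median's $\ell^\infty$-Lipschitz property must be combined with a quantitative use of the boundedness~(i) of individual committee members, and $\omega$ must be calibrated so as to keep $|Q|$ affordable while also bounding the norming-induced margin loss below $\eps/2$. The balance $\omega-1\asymp\eps/(Mk)$ achieves both simultaneously and drives the $\log(n_v Mk/\eps)$ factor in the final exponent.
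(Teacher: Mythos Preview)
Your pointwise median-of-means amplification and your bookkeeping for $\log|Q|$ via \cref{thm:norming} are both correct; in particular the trade-off $d=2$ with $-\log\log\omega\asymp\log(Mk/\eps)$ does reproduce the stated cardinality. The gap is entirely in the extension step from $Q$ to $\pio(V)$: you are treating the norming set as if it were an $\eps$-net, and it is not one.

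The norming property $\|q\|_{\pio(V)}\le\omega\|q\|_Q$ is a sup-norm comparison for polynomials $q$; it gives no guarantee that an arbitrary $\x\in\pio(V)$ lies near some $\x_0\in Q$, however close $\omega$ is to $1$. Your $\ell^\infty$-Lipschitz bound on the median requires precisely such proximity to make $\max_i\bigl|\|\S_i\x\|^2-\|\S_i\x_0\|^2\bigr|$ small, and ``the norming inequality for each $q_i(\x)=\|\S_i\x\|^2$'' only yields $\sup_\x q_i(\x)\le\omega\sup_{\x_0\in Q}q_i(\x_0)$, which simply reproduces assumption~(i) up to a factor $\omega$ and says nothing about pointwise differences $q_i(\x)-q_i(\x_0)$. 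Independently, assumption~(i) is stated only for $\x\in\pio(V)$, so it cannot be invoked on the normalized chord $(\x-\x_0)/\|\x-\x_0\|\in\overline{\pio(V_0-V_0)}$; \cref{rmk:Mbnd} concerns relaxing boundedness to a tail bound, not enlarging the domain on which the bound holds.

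The paper avoids both issues by manufacturing a genuine polynomial certificate for the median, so that the norming inequality applies directly and no net-type proximity is ever needed. Via Bernstein's approximation of $\relu$, one builds a univariate polynomial $p$ of degree $\lesssim Mk/\eps$ on $[0,2M]$ satisfying $p\in[0,1]$, $p\le\eta$ on $[0,1+\eps/2]$, and $p\ge 1-\eta$ on $[1+\eps,2M]$, with $\eta=\tfrac{1}{3(k+1)}$. Then $P(\x)=\sum_{i=1}^{2k+1}p(\|\S_i\x\|^2)$ is an honest polynomial in $\x$ of degree $\lesssim Mk/\eps$, and $P(\x)<k+1$ certifies $\med_i\|\S_i\x\|^2\le 1+\eps$. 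The norming set is chosen with degree parameter $d\sim Mk/\eps$ and $\omega=1+\eta$ (rather than your $d=2$ with tiny $\omega-1$), giving the same $\log|Q|$; but now the norming inequality applied to $P$ itself propagates $P\le k+(k+1)\eta$ on $Q$ to $P\le(1+\eta)\bigl(k+(k+1)\eta\bigr)<k+1$ on all of $\pio(V)$. A symmetric polynomial handles the lower tail. Assumption~(i) enters only to confine each $\|\S_i\x\|^2$ to $[0,2M]$ so the indicator approximation is valid---never on chords.
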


\begin{remark}\label{rmk:Mbnd}
    The boundedness assumption $\|\S\x\|^2\leq M$ can be replaced by almost boundedness.
    That is, for any $\x\in {\pio(V)}$,
    \begin{equation*}
        \pr(\|\S\x\|^2\geq M) \leq e^{-\beta}
    \end{equation*}
    for some $\beta$.
    Then the arguments in the proof will carry through, and the failure probability will also depend on $\beta$ (see \cref{rmk:Mbnd-pf} for more details).
    That said, many commonly used sketching operators are indeed bounded.
    For example, if $\S$ is the KFJLT sketch or the Khatri-Rao sketch with Rademacher entries, then $M = \cO(N)$ and $\log M = \cO(\log N)$. 
\end{remark}

\begin{proof}
    [Proof Outline]
    The details of the proof of \cref{thm:medsketch} take a few steps; a complete proof is delayed to \cref{sec:pfmed}.
    By scaling homogeneity of $\tilde S$, it suffices to have
    \begin{equation*}
        (\|\tilde S(\x)\|^2 - 1)^2 \leq \eps^2~\text{for all }\x\in \overline{\pio(V)}.
    \end{equation*}
    Unlike in \cref{sec:norming}, the left-hand side is no longer corresponds to the norm of any polynomial on $V$ since $\tilde S$ is nonlinear.
    However, using the approximation power of polynomials on compact sets, which is essentially why we have condition (i), we can construct polynomials $P(\x)$ and $R(\x)$ to approximately count  $\text{\small \#}\set{i: \|\S_i\x\|^2 \geq 1+\eps}$ and $\text{\small \#}\set{i: \|\S_i\x\|^2 \leq 1-\eps}$ respectively.
    We upper bound the polynomials $P$ and $R$ on the entire set $V$ by bounding them on a norming set.
    Once $P$ and $R$ are bounded, the median is controlled.
\end{proof}

Fixing $\theta = \cO(1)$, the sketching dimension $m$ needed for \eqref{eq:theta} usually scales no worse than $\cO(\eps^{-2})$.
The assumption $\log D = \cO(n_v\log n_v)$ is satisfied, if in both the variety and the polynomial cases the defining polynomials of $V$ have a degree at most $\poly(n_v)$.
Combining these and \cref{rmk:Mbnd}, the total number of measurement is
\begin{equation*}
    mk \sim C \eps^{-2}(n_v \log(N/\eps) + \log(1/\delta)).
\end{equation*} 
This shows the bound on the total number of measurement is near optimal.
Even if the set $V$ is a linear subspace and unstructured Gaussian measurements are used, the total number of measurements is already $\Omega\big(\eps^{-2}(n_v + \log(1/\delta))\big)$ (see e.g., \cite{woodruff2014sketching}). 

If in particular $V$ is the set of all $n^d$ tensors of CP rank at most $r$, then $n_v \leq ndr$, and
\begin{equation*}
    mk = C\eps^{-2}\big[ndr \cdot (d\log n + \log(1/\eps)) + \log(1/\delta)\big].
\end{equation*}
This agrees with the sketching dimension bound given in \cite{zhang2023covering} when Gaussian sketch operators are used, up to the constant and the $\log(1/\eps)$ term.
Yet, the new result here applies to general sketch operators.
A possible application of \cref{alg:medjlt} is therefore to the model problem of low CP rank approximation (\cref{subsec:aim}).  Given a tensor $\t T$, median sketch with tensor structured sketches could be used to efficiently evaluate the loss $\|\t T - \cp(\A_1,\ldots,\A_d)\|^2$ for any $\A_1,\ldots,\A_d$.

\section{Proofs}\label{sec:proof}

\subsection{Proof of \cref{thm:main}}\label{sec:pfmain}

\begin{proof}
  Define $p(\x) = (\|\S\x\|^2 - 1)^2$. 
It suffices to show that 
\begin{equation*}
    \|p\|_{\overline{\pio(V)}} \leq \eps^2
\end{equation*}
holds with probability at least $1 - \delta$.

View $\overline{\pio(V)}$ as a compact subset of the variety $\zcl{\pio(V_0)}$.   
We will argue that $\zcl{\pio(V_0)}$ is equi-dimensional with dimension at most $n$ and degree at most $2D$.    
Firstly if $V_0$ is a cone (i.e., closed under scalar multiplication), $\zcl{\pio(V_0)} = V_0 \cap S^{N-1}$.  Then $\zcl{\pio(V_0)}$ is equi-dimensional with dimension $n-1$, and has degree at most $2D$ by B{\'e}zout's theorem. 
Else $V_0$ is not a cone, and we regard $\zcl{\pio(V_0)}$ as the Zariski closure of the image of the variety $W = \{(\x, \lambda) \in \mathbb{R}^{N} \times \mathbb{R} : \x \in V_0, \lambda^2 = \| \x \|^2 \}$ under the rational map $\psi(\x,\lambda) = \x / \lambda$. 
Here $W$ is equi-dimensional with dimension $n$, and has degree at most $2D$.  
By assumption $\psi|_{W}$ is generically finite-to-$1$, whence $\zcl{\pio(V_0)}$ is equi-dimensional with dimension $n_v$.  
Further, the degree of $\zcl{\pio(V_0)}$ is at most $2D$;  indeed, pull back the intersection of (the complexification of) $\zcl{\pio(V_0)}$ with a generic affine subspace of codimension $n$ via $\psi$ to (the complexification of) $W$ and use that the coordinate functions of $\psi$ are quotients of degree $1$ functions. 
This shows the dimension and degree bound.

    Next we apply \cref{thm:norming} to $\overline{\pio(V)} \subseteq \zcl{\pio(V_0)}$.  
    It gives a norming set $Q \in \cM(\zcl{\pio(V)}, 4, 2)$ with cardinality
    \begin{equation*}
        \log |Q| \leq C_1\log D + C_1 n\log(C_2n).
    \end{equation*}
In the case where $V_0$ is the Zariski closure of a polynomial image, then we can bound the degree $D$ by $d^n$. The rest of the proof can be completed following the proof outline in \cref{sec:norming}. 
\end{proof}

\subsection{Proof of \cref{thm:medsketch}} \label{sec:pfmed}

The condition we need is $\med_i \|\S_i \x\|^2 \asymp 1\pm \eps$ for all $\x\in\pio(V)$.
Let us focus on one side $\med_i \|\S_i \x\|^2 \leq 1 + \eps$, and the other side will follow from an anolgous argument.
To check this one-sided condition, we define the counting function
\begin{equation*}
    \wb p(x) = \ind{x > 1 + \eps}.
\end{equation*}
Then we require $\sum_{i = 1}^{2k+1} \wb p(\|\S_i\x\|^2) \leq k$ for all $\x \in \pio(V)$.
If we are able to approximate $\wb p$ by a nonnegative polynomial $p$, then we are left to check if $\|p\|_{\pio(V)} \leq k$.
This can be done using the norming set. 
We do this by first approximating the indicator function $\wb p$ with a continuous piecewise linear map, and then approximate the piecewise linear map with a polynomial using a classical result by Bernstein stated below.

\begin{lemma}[Bernstein]\label{lem:relu}
    For all $d\in\Z_+$, there exists a polynomial $p$ of degree at most $d$ such that on $[-1,1]$ it holds that $|p(x) - \relu(x)| \leq Cd^{-1}$, where $\relu(x) = x\ind{x\geq 0}$.
\end{lemma}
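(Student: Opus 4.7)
The plan is to reduce the estimate to the classical Bernstein--Jackson theorem on polynomial approximation of $|x|$ on $[-1,1]$. Since $\relu(x) = (x + |x|)/2$, it suffices to exhibit a polynomial $q$ of degree at most $d$ with $\sup_{x\in[-1,1]}|q(x) - |x|| \leq C/d$; then $p(x) := (x + q(x))/2$ is itself a polynomial of degree at most $d$, and satisfies $|p(x)-\relu(x)|\leq C/(2d)$ uniformly on $[-1,1]$.

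For this reduced claim I would invoke Jackson's direct theorem: for every $f\in C[-1,1]$ and every $d\geq 1$, the best uniform approximation error by polynomials of degree at most $d$ is bounded by $C\,\omega(f,1/d)$, where $\omega(f,\cdot)$ denotes the modulus of continuity. Since $|x|$ is $1$-Lipschitz, $\omega(|x|,1/d)\leq 1/d$, and therefore $E_d(|x|)\leq C/d$ as required.

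If a self-contained argument is preferred, I would instead use the explicit Chebyshev expansion. Setting $x=\cos\theta$, a standard Fourier computation yields
\begin{equation*}
  |\cos\theta| = \frac{2}{\pi} - \frac{4}{\pi}\sum_{n\geq 1}\frac{(-1)^n}{4n^2-1}\cos(2n\theta),
\end{equation*}
and since $\cos(2n\theta)=T_{2n}(x)$ (the Chebyshev polynomial of degree $2n$), we obtain
\begin{equation*}
  |x| = \frac{2}{\pi} - \frac{4}{\pi}\sum_{n\geq 1}\frac{(-1)^n}{4n^2-1}T_{2n}(x).
\end{equation*}
Truncating at $n\leq \lfloor d/2\rfloor$ gives a polynomial $q$ of degree at most $d$, and using $|T_{2n}(x)|\leq 1$ on $[-1,1]$ the truncation error is bounded by $\sum_{n>d/2}\tfrac{4}{\pi(4n^2-1)}\leq C/d$.

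There is no essential obstacle: the statement reduces to a classical approximation-theoretic fact due to Bernstein, with the only concrete work being the elementary Fourier computation of the Chebyshev coefficients of $|x|$ together with tail estimation of a rapidly converging series. The result may also simply be cited from any standard reference on approximation theory, e.g., Rivlin's \emph{Chebyshev Polynomials}.
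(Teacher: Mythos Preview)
Your proposal is correct. Both routes you sketch --- reducing $\relu(x)=(x+|x|)/2$ to the approximation of $|x|$ and then invoking either Jackson's theorem (via the Lipschitz modulus of continuity) or the explicit Chebyshev expansion of $|x|$ with a tail bound --- are standard and valid ways to obtain the $Cd^{-1}$ rate.

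As for comparison: the paper does not actually prove this lemma. It is stated as a classical result attributed to Bernstein and then immediately used in the next lemma. So your write-up supplies strictly more than the paper does here; either of your arguments would serve as a self-contained justification if one were desired.
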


Applying Bernstein's result, we give an approximation result for the counting function below.

\begin{lemma}\label{lem:approxcnt}
    For any $\eps \in (0, 1)$, $M \geq 3$, and $\eta \in (0, 1/2)$, there is a polynomial $p$ of degree $d \leq \frac{CM}{\eps\eta}$ on $[0, M]$ such that (i) $p(x) \in [0, 1]$; (ii) $x\in[0, 1+\eps/2]$ implies $p(x) \leq \eta$; and (iii) $x\in[1+\eps, M]$ implies $p(x) \geq 1- \eta$.
\end{lemma}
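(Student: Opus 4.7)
The plan is to reduce the construction to polynomial approximation of the ReLU function via \cref{lem:relu}, applied after an affine rescaling and a final affine correction to enforce the range constraint.

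First, I would define the piecewise linear target
\begin{equation*}
q(x) = \frac{2}{\eps}\bigl(\relu(x - a) - \relu(x - b)\bigr),
\end{equation*}
where $a = 1 + \eps/2$ and $b = 1 + \eps$. By direct inspection, $q \equiv 0$ on $[0, a]$, $q$ rises linearly from $0$ to $1$ on $[a, b]$, and $q \equiv 1$ on $[b, M]$; in particular $q \in [0, 1]$ throughout $[0, M]$.

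Next, I would approximate each ReLU piece by a polynomial. Using the scaling identity $\relu(y) = R\,\relu(y/R)$ for $R > 0$, a degree-$d$ polynomial $p_0$ from \cref{lem:relu} yields, via $y \mapsto M \cdot p_0(y/M)$, a degree-$d$ polynomial that approximates $\relu$ uniformly on $[-M, M]$ with error at most $CM/d$. Since $|x - a|, |x - b| \leq M$ whenever $x \in [0, M]$ (using $M \geq 3$), combining these two shifted approximations yields a polynomial $\tilde p$ of degree $d$ with
\begin{equation*}
\sup_{x \in [0, M]} |\tilde p(x) - q(x)| \leq \frac{4CM}{\eps d}.
\end{equation*}
Choosing $d = \lceil 8CM/(\eps \eta) \rceil$ drives this error below $\eta/2$.

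Finally, $\tilde p$ may overshoot $[0, 1]$ by up to $\eta/2$, so I would apply a single affine correction to enforce (i):
\begin{equation*}
p(x) = \frac{\tilde p(x) + \eta/2}{1 + \eta}.
\end{equation*}
Since $q \in [0, 1]$ and $|\tilde p - q| \leq \eta/2$, we have $\tilde p \in [-\eta/2, 1 + \eta/2]$, whence $p \in [0, 1]$, giving (i). On $[0, a]$, $q = 0$ forces $\tilde p \leq \eta/2$, hence $p \leq \eta/(1+\eta) \leq \eta$, giving (ii). On $[b, M]$, $q = 1$ forces $\tilde p \geq 1 - \eta/2$, hence $p \geq 1/(1+\eta) \geq 1 - \eta$ via the identity $(1-\eta)(1+\eta) \leq 1$, giving (iii). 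The degree of $p$ equals that of $\tilde p$, namely $\cO(M/(\eps\eta))$, as required.

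The only subtlety I anticipate is the tension between enforcing the range (i) and preserving the separation required by (ii) and (iii). A crude affine correction would consume the entire $\eta$ budget; building a safety factor of $1/2$ into the ReLU approximation error lets the elementary identity $(1-\eta)(1+\eta) \leq 1$ close the gap while costing only a constant factor in the degree.
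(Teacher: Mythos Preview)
Your proof is correct and follows essentially the same approach as the paper: both build the same piecewise-linear target $q=f_1=\frac{2}{\eps}(\relu(x-a)-\relu(x-b))$, approximate it via \cref{lem:relu} after rescaling to obtain error at most $\eta/2$, and then apply an affine correction to force the range into $[0,1]$. The only cosmetic difference is the choice of correction---the paper uses $p=(1-\eta)p_1+\eta/2$ while you use $p=(\tilde p+\eta/2)/(1+\eta)$---and both map $[-\eta/2,\,1+\eta/2]$ into $[0,1]$ while preserving (ii) and (iii).
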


\begin{proof}
 Consider the piecewise linear function $f_1(x)$ on $[0, M]$ that connects points $(0, 0)$, $(1+\eps/2, 0)$, $(1+\eps, 1)$, $(M, 1)$.
    This is an approximation to the counting function $\wb p$.
    We can rewrite $f_1$ as
    \begin{equation*}
        f_1(x) = \frac{2}{\eps}\Big(\relu(x - (1+\eps/2)) - \relu(x - (1+\eps))\Big).
    \end{equation*}
    By translating and scaling the domain to $[-1, 1]$ and applying \cref{lem:relu}, we can find $p_1$ of degree at most $\frac{CM}{\eps\eta}$ so that
    \begin{equation*}
        \|f_1 - p_1\|_{L^\infty([0, M])} \leq \frac{\eta}{2}.
    \end{equation*}
Define $f = (1- \eta) f_1 + (\eta/2)$ and $p = (1-\eta)p_1 + (\eta/2)$.  Then $f$ is the piecewise linear function on $[0,M]$ connecting points $(0,\eta/2)$, $(1+\eps/2, \eta/2)$, $(1+\eps, 1-(\eta/2))$, $(M, 1-(\eta/2))$, and 
    \begin{equation*}
        \|f - p\|_{L^\infty([0, M])} = (1-\eta)\| f_1 - p_1 \|_{L^\infty([0, M])} \leq \frac{\eta}{2}.
    \end{equation*}
    This implies that the 3 conditions hold for $p$ using the triangle inequality. 
\end{proof}

Next, we show how to use the constructed polynomial $p$ as a certificate to locate the median.

\begin{lemma}
    \label{lem:cerify}
    Let $V$ be the set to be sketched.
    Fix $\eps\in(0, 1)$ and $M \geq 3$.
    For $\eta \in (0, \frac{1}{3(k+1)}]$, take a norming set $Q = \set{\x_j}_j \in \cM\left(\overline{\pio(V)}, \frac{CM}{\eps\eta}, 1 + \eta\right)$.
    Let $\S_i$, $i = 1,\ldots,2k+1$, be a committee of sketch operators.
    Suppose the committee is such that $\max_{i,j}\|\S_i\x_j\|^2\leq M$.
    Then
    \begin{equation*}
        \med_i\|\S_i\x\|^2 \asymp 1 \pm \eps/2
    \end{equation*}
    for all $\x\in Q$ implies that
    \begin{equation*}
        \med_i\|\S_i\x\|^2 \asymp 1 \pm \eps
    \end{equation*}
    for all $\x\in \pio(V)$.
\end{lemma}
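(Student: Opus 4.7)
The plan is to use \cref{lem:approxcnt} (approximation of the counting indicator by a polynomial) together with the norming-set property to transfer a count-based median bound from the finite set $Q$ to all of $\pio(V)$. By the symmetric structure of the conclusion, it suffices to treat one side: if $\med_i \|\S_i \x\|^2 \leq 1 + \eps/2$ for all $\x \in Q$, then $\med_i \|\S_i \x\|^2 \leq 1 + \eps$ for all $\x \in \pio(V)$; the opposite inequality follows from the analogous argument applied to a polynomial approximating $\ind{x < 1-\eps}$ in place of $\ind{x > 1+\eps}$.

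First, I apply \cref{lem:approxcnt} with the given $\eps$, $M$, and $\eta$ to obtain a univariate polynomial $p$ of degree at most $CM/(\eps\eta)$ satisfying $p([0, M]) \subseteq [0, 1]$, $p \leq \eta$ on $[0, 1 + \eps/2]$, and $p \geq 1 - \eta$ on $[1 + \eps, M]$. Form the composite
$$P(\x) = \sum_{i = 1}^{2k+1} p(\|\S_i \x\|^2),$$
which is a polynomial in $\x$ of degree at most $CM/(\eps\eta)$ (after absorbing the factor of $2$ from the inner quadratic into the universal constant), so that $P$ falls within the degree class to which the norming-set hypothesis $Q \in \cM(\overline{\pio(V)}, CM/(\eps\eta), 1 + \eta)$ applies.

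Next, for each $\x_j \in Q$, the hypothesis $\med_i \|\S_i \x_j\|^2 \leq 1 + \eps/2$ combined with the boundedness $\|\S_i \x_j\|^2 \leq M$ says that at least $k+1$ of the $2k+1$ values lie in $[0, 1+\eps/2]$ while all of them lie in $[0, M]$; hence
$$P(\x_j) \leq (k+1)\eta + k \cdot 1 = k + (k+1)\eta.$$
Since $P \geq 0$ on $Q$, the norming-set property upgrades this pointwise bound on $Q$ to the uniform bound $|P(\x)| \leq (1+\eta)\bigl(k + (k+1)\eta\bigr)$ for every $\x \in \overline{\pio(V)}$.

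Finally, suppose for contradiction that some $\x \in \pio(V)$ satisfies $\med_i \|\S_i \x\|^2 > 1 + \eps$. Then at least $k+1$ of the values $\|\S_i \x\|^2$ lie in $(1 + \eps, M]$ (using that $M$ also dominates $\|\S_i \x\|^2$ on $\pio(V)$, as is the ambient setting in which \cref{thm:medsketch} invokes this lemma), and therefore $P(\x) \geq (k+1)(1-\eta)$. A direct calculation gives $(k+1)(1-\eta) - (1+\eta)\bigl(k + (k+1)\eta\bigr) = 1 - (3k+2)\eta - (k+1)\eta^2$, and the choice $\eta \leq 1/(3(k+1))$ is precisely what makes this quantity strictly positive, contradicting the uniform bound on $P$. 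I expect the main obstacle not to be conceptual but rather the bookkeeping of numerical slack: one must simultaneously align the polynomial degree $CM/(\eps\eta)$ of $P$ with the norming capability of $Q$ and keep $\eta$ small enough that the multiplicative loss $(1+\eta)$ from the norming set does not erase the gap of size roughly $1$ between the counts of ``small'' and ``large'' values forced by the median.
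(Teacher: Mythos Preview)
Your overall strategy matches the paper's: build $P(\x)=\sum_i p(\|\S_i\x\|^2)$ from the polynomial of \cref{lem:approxcnt}, bound $P$ on $Q$ via the median hypothesis, transfer to $\pio(V)$ via the norming-set inequality, and extract the count bound. The numerics in your contradiction step are correct and equivalent to the paper's check that $\frac{1+\eta}{1-\eta}\bigl((k+1)\eta+k\bigr)<k+1$.

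There is, however, a genuine gap. Your lower bound $P(\x)\geq (k+1)(1-\eta)$ at a putative bad point $\x\in\pio(V)$ requires that each $\|\S_i\x\|^2$ lies in the interval on which $p$ is controlled. You construct $p$ on $[0,M]$ and then assert that $M$ dominates $\|\S_i\x\|^2$ on $\pio(V)$ ``as is the ambient setting in which \cref{thm:medsketch} invokes this lemma.'' But the lemma's hypothesis is only $\max_{i,j}\|\S_i\x_j\|^2\leq M$ on $Q$, not on $\pio(V)$; outside $[0,M]$ you have no information about $p$, so neither $p\geq 0$ nor $p\geq 1-\eta$ is available, and both the nonnegativity used in $\|P\|_Q$ and the lower bound on $P(\x)$ break down. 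Appealing to the eventual application does not prove the lemma as stated.

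The paper closes this gap internally: it applies the norming-set inequality to the degree-$2$ polynomial $\x\mapsto\|\S_i\x\|^2$ to get $\|\S_i\x\|^2\leq (1+\eta)M\leq 2M$ on $\pio(V)$ from the bound $M$ on $Q$, and accordingly constructs $p$ on $[0,2M]$ rather than $[0,M]$ (this costs only a constant in the degree). With that change your argument goes through verbatim.
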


\begin{proof}
    We first prove one direction that if for all $\x\in Q$, $\med_i\|\S_i\x\|^2 \leq 1 + \eps/2$, then for all $\x \in \pio(V)$, $\med_i\|\S_i\x\|^2 \leq 1 + \eps$.
    To this end, we construct a polynomial $p$ on $[0, 2M]$ as in \cref{lem:approxcnt} of degree at most $CM/(\eps\eta)$ that satisfies the 3 conditions on $[0, 2M]$.
    Define a nonnegative valued polynomial of degree at most $2CM/(\eps\eta)$:
    \begin{equation*}
        P(\x) = \sum\nolimits_{i = 1}^{2k+1} p(\|\S_i\x\|^2).
    \end{equation*}
    Then we deduce for all $\x\in Q$,
    \begin{align*}
        \med_i\|\S_i\x\|^2 \leq 1 + \eps/2
        ~\Rightarrow~ 
        \#\set{i: \|\S_i\x\|^2 > 1 + \eps/2} \leq k
        ~\Rightarrow~
        P(\x) \leq (k+1)\eta + k.
    \end{align*}
    Since $Q$ is a norming set, this implies for all $\x\in \pio(V)$,
    \begin{equation*}
        P(\x) \leq (1+\eta)((k+1)\eta + k).
    \end{equation*}
    Since $\|\S_i\x\|^2\leq M$ on $Q$, $\|\S_i\x\|^2 \leq 2M$ on $\pio(V)$.
    Hence for all $\x\in\pio(V)$,
    \begin{equation*}
        \#\set{i: \|\S_i\x\|^2 > 1 + \eps} \leq \frac{1+\eta}{1-\eta}((k+1)\eta + k).
    \end{equation*}
    It is elementary to check that as long as $\eta \leq \frac{1}{3(k+1)}$ as required in the lemma statement, the right-hand side is strictly smaller than $k+1$.
    This implies $ \text{\small \#}\set{i: \|\S_i\x\|^2 > 1 + \eps} \leq k$ and hence 
    \begin{equation*}
    \med_i\|\S_i\x\|^2 \leq 1 + \eps.
    \end{equation*} 

    In order to show the other direction, we construct an approximate counting polynomial $r(x)$ on $[0, 2M]$ similar to $p(x)$, but
    (i) $r(x) \in [0, 1]$; (ii) $x\in[0, 1-\eps]$ implies $r(x) \geq 1-\eta$; and (iii) $x\in [1-\eps/2, 2M]$ implies $r(x) \leq \eta$.
    Repeating the argument in \cref{lem:approxcnt}, such $r$ can have the same degree as $p$.
    Applying the one-sided argument above to $R(\x) = \sum_{i=1}^{2k+1} r(\|\S_i\x\|^2)$ gives the bound in the other direction.
\end{proof}

The last ingredient needed to prove \cref{thm:medsketch} is a general form of guarantee for median of means estimation.

\begin{lemma}\label{lem:med}
    Let $X_1,\ldots, X_{2k+1}$ be i.i.d. copies of a random variable $X$.
    If for $a < b$ we have
    \begin{equation*}
        \pr(X \leq a) \leq p~~\text{and}~~\pr(X \geq b) \leq p,
    \end{equation*}
    then 
    \begin{equation*}
        \pr(\med_i X_i \notin [a, b]) \leq \frac{1}{\sqrt{\pi(k+\frac{1}{4})}}(4p)^{k+1}.
    \end{equation*}
\end{lemma}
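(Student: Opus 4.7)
The plan is to reduce the two-sided deviation of the median to two one-sided tail events and bound each by a sharp binomial estimate. Since the median of $2k+1$ numbers is the $(k+1)$-th order statistic, the event $\med_i X_i \notin [a,b]$ decomposes as
\begin{equation*}
\{\med_i X_i < a\} \cup \{\med_i X_i > b\} \subseteq \{N_a \geq k+1\} \cup \{N_b \geq k+1\},
\end{equation*}
where $N_a = \#\{i : X_i \leq a\}$ and $N_b = \#\{i : X_i \geq b\}$. By independence, $N_a$ and $N_b$ are Binomial$(2k+1, q)$-distributed for some $q \leq p$, and the tail $\pr(N \geq k+1)$ is monotone in $q$, so it suffices to work with $q = p$.

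For each tail I would apply the standard \emph{union bound over witnesses}: the event $\{N_a \geq k+1\}$ is the same as the existence of some $(k+1)$-subset $S \subseteq [2k+1]$ with $X_i \leq a$ for every $i \in S$, and each such subset has probability at most $p^{k+1}$ by independence. Hence
\begin{equation*}
\pr(N_a \geq k+1) \leq \binom{2k+1}{k+1} p^{k+1},
\end{equation*}
and symmetrically for $N_b$. Summing the two contributions gives the preliminary bound $2\binom{2k+1}{k+1} p^{k+1}$.

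The final step is to extract the constant $1/\sqrt{\pi(k+\tfrac14)}$ from the binomial coefficient. I would use the identity $\binom{2k+1}{k+1} = \tfrac{2k+1}{k+1}\binom{2k}{k} \leq 2\binom{2k}{k}$ together with the sharp Wallis-type inequality
\begin{equation*}
\binom{2k}{k} \leq \frac{4^k}{\sqrt{\pi(k+\tfrac14)}},
\end{equation*}
which is a well-known consequence of Stirling's formula (one checks it is essentially tight already at $k=1,2$). Multiplying everything together,
\begin{equation*}
\pr(\med_i X_i \notin [a,b]) \leq 2 \cdot 2 \cdot \frac{4^k}{\sqrt{\pi(k+\tfrac14)}}\, p^{k+1} = \frac{(4p)^{k+1}}{\sqrt{\pi(k+\tfrac14)}},
\end{equation*}
which is the desired inequality. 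There is no real obstacle here; the only delicate point is avoiding a loose constant by using the precise central-binomial bound above instead of the naive $\binom{2k+1}{k+1} \leq 2^{2k+1}$.
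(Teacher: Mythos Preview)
Your proof is correct and follows essentially the same route as the paper: bound each one-sided tail of the median by $\binom{2k+1}{k+1}p^{k+1}$ via a union bound over $(k+1)$-subsets, rewrite $\binom{2k+1}{k+1}=\tfrac{2k+1}{k+1}\binom{2k}{k}$, and invoke the sharp central-binomial estimate $\binom{2k}{k}\le 4^k/\sqrt{\pi(k+\tfrac14)}$. The only cosmetic difference is that you make the inequality $\tfrac{2k+1}{k+1}\le 2$ and the final factor-of-$2$ combination explicit, whereas the paper leaves these to the reader.
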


\begin{proof} 
    Let the cdf of $X$ be $F$.
    By a union bound, the cdf of the median $G$ satisfies
    \begin{equation*}
        G(a) \leq \binom{2k+1}{k+1} F(a)^{k+1} = \binom{2k}{k}\frac{2k+1}{k+1}F(a)^{k+1}.
    \end{equation*}
    The binomial coefficient is related to the Catalan number and can be bounded by
    \begin{equation*}
        \binom{2k}{k} \leq \frac{4^k}{\sqrt{\pi(k + \frac{1}{4})}}.
    \end{equation*}
    We can lower bound $G(b)$ via a similar argument.
\end{proof}

Finally, we are ready to put everything together and prove \cref{thm:medsketch}.

\begin{proof}
     Fix $\eta = \frac{1}{3(k+1)}$. 
    We generate a norming set $Q \in\cM(\overline{\pio(V)}, \frac{CM}{\eps\eta}, 1 + \eta)$ of cardinality
    \begin{equation*}
        \log|Q| 
        \leq 
        {C}\log(2D) + C n_v\left[\log\left(n_v\cdot \frac{CM}{\eps\eta}\right) - \log\log(1 + \eta)\right]
        \leq
        {C}\log D + C n_v\log\left(\frac{n_vMk}{\eps}\right),
    \end{equation*}
    using \cref{thm:norming} and the fact that $\overline{\pio(V)} \subseteq \zcl{\pio(V_0)}$ where the dimension and degree of $\zcl{\pio(V_0)}$ are bounded by $n_v$ and $2D$ respectively (see the proof of \cref{thm:main}).
    According to \cref{lem:med}, for each $\x\in Q$, we have
    \begin{equation*}
        \pr(\med_i\|\S_i\x\|^2 \notin [1-\eps/2, 1+\eps/2]) \leq Ck^{-1/2} \exp(-C(\theta - \log 4)k).
    \end{equation*}
    Applying a union bound over $Q$, 
    \begin{multline*}
        \pr(\forall \x\in Q,~\med_i\|\S_i\x\|^2 \asymp 1\pm \eps/2) 
        \geq
        1 - C|Q|k^{-1/2}\exp(-C(\theta - \log 4)k)\\
        \geq
        1 - \exp\left[-C_1(\theta - \log 4)k + {C_2}\log D + C_3n_v \log\left(\frac{n_vMk}{\eps}\right)\right].
    \end{multline*}
    Since by assumption $\|\S_i\x\|^2 \leq M$ for all $\x\in Q$, \cref{lem:cerify} guarantees 
    \begin{multline*}
        \pr(\forall \x\in \pio(V),~\med_i\|\S_i\x\|^2 \asymp 1\pm \eps) \\
        \geq
        1 - \exp\left[-C_1(\theta - \log 4)k + {C_2}\log D + C_3n_v \log\left(\frac{n_vMk}{\eps}\right)\right].
    \end{multline*}
    Since $\med_i\|\S_i\pio(\x)\|^2 \asymp 1\pm \eps\,\Leftrightarrow\,\|\tilde S(\x)\|^2\asymp (1\pm\eps)\|\x\|^2$,
    this is what we wanted to show.
    
Under the additional assumptions, the above bound simplifies to
    \begin{equation*}
        \pr(\forall \x\in \pio(V),~\med_i\|\S_i\x\|^2 \asymp 1\pm \eps) 
        \geq
        1 - \exp\big(-Ck + Cn_v\log(M/\eps) + Cn_v\log k\big).
    \end{equation*}
    In order for this to be greater than $1-\delta$, we need
    \begin{equation*}
        k \geq Cn_v\log(M/\eps) + Cn_v\log k + C \log(1/\delta).
    \end{equation*}
    It is not hard to verify that when $M \geq d_v$ and
    \begin{equation*}
        k \geq C_4 (n_v \log(M/\eps) + \log(1/\delta))
    \end{equation*}
    for some constant $C_4$,
    the inequality indeed holds. 
\end{proof}

\begin{remark}\label{rmk:Mbnd-pf}
    As pointed out in \cref{rmk:Mbnd}, the condition $\|\S\x\|^2\leq M$ for all $\x\in\pio(V)$ is overkill.
    All we use in the proof is that for every $\S_i$ in the committee and every $\x_j$ in the norming set $Q$, $\|\S_i\x_j\|^2\leq M$ as required by \cref{lem:cerify}.
    Thus, it is possible to replace the boundedness condition by $\pr(\|\S\x\|^2\geq M) \leq e^{-\beta}$ for every fixed $\x\in\pio(V)$ and take a union \nolinebreak bound. 
\end{remark}

\vspace*{2ex}
\noindent
\bf{Acknowledgments.} 
Y.Z. was supported in part by a Graduate Continuing Fellowship and a National Initiative for Modeling and Simulation (NIMS) Graduate Student Fellowship at UT Austin. 
J.K. was supported in part by
NSF DMS 2309782, NSF DMS 2436499, NSF CISE-IIS 2312746, DE SC0025312, and a J. Tinsley Oden Faculty Fellowship at UT Austin.

\vspace*{2ex}
\printbibliography

\end{document}